\theoremstyle{plain}
\newtheorem{theorem}{Theorem}[section]
\newtheorem{lemma}[theorem]{Lemma}
\newtheorem{corollary}[theorem]{Corollary}
\newtheorem{proposition}[theorem]{Proposition}
\newtheorem{thm}{Theorem}
\theoremstyle{definition}
\newtheorem{definition}[theorem]{Definition}
\theoremstyle{remark}
\newtheorem{remark}{Remark}
\newcommand{\C}{\mathbb{C}}
\newcommand{\N}{\mathbb{N}}
\newcommand{\R}{\mathbb{R}}
\newcommand{\mcC}{\mathcal{C}}
\newcommand{\mcE}{\mathcal{E}}
\newcommand{\mcH}{\mathcal{H}}
\newcommand{\mcL}{\mathcal{L}}
\newcommand{\mcM}{\mathcal{M}}
\newcommand{\mrmT}{\mathrm{T}}
\newcommand{\inv}{\mathrm{inv}}
\newcommand{\TMS}{\Sigma_{{\bf A}}^+}
\newcommand{\tms}{\sigma_{{\bf A}}}
\newcommand{\bTMS}{\Sigma_{{\bf B}}^+}
\newcommand{\btms}{\sigma_{{\bf B}}}
\DeclareMathOperator{\Aut}{Aut}
\title[\null]{Weak rigidity of entropy spectra}
\author[\null]{Katsukuni\ \ Nakagawa}
\address{K.\ Nakagawa\\
Graduate School of Advanced Science and Engineering\\
Hiroshima University\\
Higashi-Hiroshima 739-8526\\
Japan}
\email{ktnakagawa@hiroshima-u.ac.jp}
\begin{document}

\subjclass[2010]{37A35, 37B10, 37B40.}

\keywords{Entropy spectra, Rigidities of entropy spectra, Gibbs measures.}

\begin{abstract}
In this paper, we consider entropy spectra on topological Markov shifts.
We prove that if two measure-preserving dynamical systems of Gibbs measures with H\"{o}lder continuous potentials are isomorphic, then their entropy spectra are the same.
This result raises a new rigidity problem.
We call this problem the weak rigidity problem, contrasting it with the strong rigidity problem proposed by Barreira and Saraiva.
We give a complete answer to the weak rigidity problem for Markov measures on a topological Markov shift with an aperiodic transition matrix of size 2.
\end{abstract}

\maketitle


\section{Introduction}

\label{sec:introduction}

Let $N\ge2$ be an integer and ${\bf A}$ an $N\times N$ zero-one matrix.
We set
\[
\TMS=\{\omega=(\omega_n)_{n\in\N\cup\{0\}}\in\{1,\dots,N\}^{\N\cup\{0\}}:{\bf A}(\omega_n\omega_{n+1})=1,\ n\in\N\cup\{0\}\}
\]
and define the metric $d$ on $\TMS$ by 
\[
d(\omega,\omega^{\prime})=\sum_{n=0}^{\infty}\frac{|\omega_n-\omega^{\prime}_n|}{2^n}.
\]
Then, $\TMS$ is a compact metric space and the \emph{shift map} $\tms:\TMS\to\TMS$ defined by 
\[
(\tms\omega)_n=\omega_{n+1},\quad n\in\N\cup\{0\}
\]
is a continuous map. 
We call the topological dynamical system $(\TMS,\tms)$ a \emph{(one-sided) topological Markov shift} and ${\bf A}$ the \emph{transition matrix} of the shift.
We say that ${\bf A}$ is \emph{aperiodic} if there exists a positive integer $k$ such that all entries of the power ${\bf A}^k$ are positive.
The topological Markov shifts that we consider in this paper are assumed to have aperiodic transition matrices.
Let $n\in\N$. 
A function $\phi:\TMS\to\C$ is said to be $n$-\emph{locally constant} if $\phi(\omega)=\phi(\omega^{\prime})$ for any $\omega,\omega\in\TMS$ with $\omega_k=\omega_k^{\prime},\ 0\le k\le n-1$.
We denote by $L_n(\TMS;\R)$ the set of all real-valued $n$-locally constant functions on $\TMS$.

Let $\mcM_{\inv}(\TMS)$ be the set of all $\tms$-invariant Borel probability measures on $\TMS$.
For $\mu\in\mcM_{\inv}(\TMS)$, we define the entropy spectrum of $\mu$ as follows:

\begin{definition}
\label{def:524}
Let  $\mu\in\mcM_{\inv}(\TMS)$.
We set
\[
E_{\alpha}^{(\mu)}=\left\{\omega\in\TMS:\lim_{n\to\infty}-\frac{1}{n}\log\mu\left(\left\{\omega^{\prime}\in\TMS:\omega_k=\omega^{\prime}_k,\ 0\le k\le n-1\right\}\right)=\alpha\right\}
\]
for $\alpha\in\R$ and define the function $\mcE^{(\mu)}:\R\to\R$ by
\[
\mcE^{(\mu)}(\alpha)=h(\tms|E_{\alpha}^{(\mu)}).
\]
Here, for $Z\subset\TMS$, we denote by $h(\tms|Z)$ the topological entropy of $Z$.
The function $\mcE^{(\mu)}$ is called the \emph{entropy spectrum} of $\mu$.
\end{definition}

The entropy spectrum $\mcE^{(\mu)}$ has a wealth of information about the measure $\mu$.
A \emph{rigidity} is the phenomenon that a measure is `recovered' from its entropy spectrum.
In this paper, we consider the rigidities of entropy spectra of Gibbs measures for H\"{o}lder continuous functions (see Section \ref{sec:preliminaries} for the definition of a Gibbs measure).

For a H\"{o}lder continuous function $f:\TMS\to\R$, we denote by $\mu_f$ the (unique) Gibbs measure for $f$. 
For measure-preserving dynamical systems of Gibbs measures, we give the following two definitions:

\begin{definition}
\label{def:172}
Let $f,g:\TMS\to\R$ be H\"{o}lder continuous.
The two measure-preserving dynamical systems $(\TMS,\tms,\mu_f)$ and $(\TMS,\tms,\mu_g)$ are said to be \emph{equivalent} if there exists a homeomorphism $\tau:\TMS\to\TMS$ such that $\tau\circ\tms=\tms\circ\tau$ and $\mu_f=\mu_g\circ\tau$.
\end{definition}

\begin{definition}
\label{def:019}
Let $f,g:\TMS\to\R$ be H\"{o}lder continuous.
The two measure-preserving dynamical systems $(\TMS,\tms,\mu_f)$ and $(\TMS,\tms,\mu_g)$ are said to be \emph{isomorphic} if there exist two Borel sets $X, Y\subset\TMS$ and a bijection $\Phi:X\to Y$ such that the following four conditions hold:
\begin{itemize}
\item$\tms(X)\subset X,\ \tms(Y)\subset Y$ and $\mu_f(X)=\mu_g(Y)=1$.
\item Both $\Phi:X\to Y$ and $\Phi^{-1}:Y\to X$ are Borel measurable. 
\item$\Phi\circ\tms=\tms\circ\Phi\ \mbox{on}\ X$.
\item$\mu_f=\mu_g\circ \Phi$.
\end{itemize}
\end{definition}

Let $\mcC$ be a set consisting of real-valued H\"{o}lder continuous functions on $\TMS$.
For $f\in\mcC$, we define the following three equivalence classes of $f$:
\begin{align*}
&[f]_1=\{g\in\mcC:\mcE^{(\mu_f)}=\mcE^{(\mu_g)}\},\\
&[f]_2=\{g\in\mcC:(\TMS,\tms,\mu_f)\ \mbox{and}\ (\TMS,\tms,\mu_g)\ \mbox{are equivalent}\},\\
&[f]_3=\{g\in\mcC:(\TMS,\tms,\mu_f)\ \mbox{and}\ (\TMS,\tms,\mu_g)\ \mbox{are isomorphic}\}.
\end{align*}

It is obvious that if $(\TMS,\tms,\mu_f)$ and $(\TMS,\tms,\mu_g)$ are equivalent, then they are isomorphic; we have $[f]_2\subset[f]_3$ for $f\in\mcC$.
In \cite{Barreira}, Barreira and Saraiva proved the following theorem:

\begin{thm}[\text{\cite[Proposition 2]{Barreira}}]
\label{thm:274}
Let $f,g:\TMS\to\R$ be H\"{o}lder continuous.
If $(\TMS,\tms,\mu_f)$ and $(\TMS,\tms,\mu_g)$ are equivalent, then $\mcE^{(\mu_f)}=\mcE^{(\mu_g)}$.
\end{thm}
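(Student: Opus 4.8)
The plan is to prove that the conjugating homeomorphism $\tau$ maps each level set $E_{\alpha}^{(\mu_f)}$ bijectively onto $E_{\alpha}^{(\mu_g)}$, and then to invoke the fact that the topological entropy $h(\tms|\cdot)$ of an arbitrary subset is preserved under a topological conjugacy of $(\TMS,\tms)$ with itself.

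First I would record the finite-window structure of $\tau$. Since $\TMS$ is compact, $\tau$ is uniformly continuous, so there is $k\in\N$ such that $\omega_i=\omega_i^{\prime}$ for $0\le i\le k$ implies $\tau(\omega)_0=\tau(\omega^{\prime})_0$; combined with $\tau\circ\tms=\tms\circ\tau$, this forces $\tau(\omega)_j$ to be a function of $\omega_j,\dots,\omega_{j+k}$ alone (the Curtis--Hedlund--Lyndon phenomenon). Applying the same reasoning to the homeomorphism $\tau^{-1}$ yields a window size $k^{\prime}\in\N$. Writing $C_n(\omega)=\{\omega^{\prime}\in\TMS:\omega_i^{\prime}=\omega_i,\ 0\le i\le n-1\}$ for the cylinder, these two facts give, for all sufficiently large $n$, the chain of inclusions
\[
\tau\bigl(C_{n+k}(\omega)\bigr)\subset C_n(\tau\omega)\subset\tau\bigl(C_{n-k^{\prime}}(\omega)\bigr),
\]
in which every set is compact, hence Borel.

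Next I would apply $\mu_g$ to this chain and use $\mu_g\circ\tau=\mu_f$, obtaining
\[
\mu_f\bigl(C_{n+k}(\omega)\bigr)\le\mu_g\bigl(C_n(\tau\omega)\bigr)\le\mu_f\bigl(C_{n-k^{\prime}}(\omega)\bigr).
\]
Taking $-\tfrac1n\log(\cdot)$ and writing, for instance, $-\tfrac1n\log\mu_f(C_{n-k^{\prime}}(\omega))=\tfrac{n-k^{\prime}}{n}\cdot\bigl(-\tfrac{1}{n-k^{\prime}}\log\mu_f(C_{n-k^{\prime}}(\omega))\bigr)$, one sees that both outer terms have, as $n\to\infty$, exactly the same limiting behaviour as $-\tfrac1n\log\mu_f(C_n(\omega))$, because the prefactors $\tfrac{n+k}{n},\tfrac{n-k^{\prime}}{n}$ tend to $1$. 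A squeeze argument then shows that the limit defining membership in $E_{\alpha}^{(\mu_g)}$ holds for $\tau\omega$ if and only if the limit defining membership in $E_{\alpha}^{(\mu_f)}$ holds for $\omega$; hence $\tau(E_{\alpha}^{(\mu_f)})=E_{\alpha}^{(\mu_g)}$ for every $\alpha\in\R$.

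Finally, since $\tau$ is a homeomorphism of $\TMS$ with $\tau\circ\tms=\tms\circ\tau$, it is a topological conjugacy of $(\TMS,\tms)$ onto itself, so $h(\tms|Z)=h(\tms|\tau(Z))$ for every $Z\subset\TMS$; taking $Z=E_{\alpha}^{(\mu_f)}$ gives $\mcE^{(\mu_f)}(\alpha)=\mcE^{(\mu_g)}(\alpha)$ for all $\alpha$, which is the claim. I expect the only delicate point to be the step replacing $\mu_g(\tau(C_n(\omega)))$ (which equals $\mu_f(C_n(\omega))$ on the nose) by $\mu_g(C_n(\tau\omega))$: these need not be equal, and controlling their discrepancy is exactly what the finite-window sandwich is for, at the cost of the harmless index shifts $n\mapsto n+k$ and $n\mapsto n-k^{\prime}$.
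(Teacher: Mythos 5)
Your proof is correct, and it is genuinely different from the route this paper takes. The Curtis--Hedlund--Lyndon argument does make both $\tau$ and $\tau^{-1}$ sliding block codes on the one-sided shift, the cylinder sandwich together with $\mu_f=\mu_g\circ\tau$ yields $\tau\bigl(E_{\alpha}^{(\mu_f)}\bigr)=E_{\alpha}^{(\mu_g)}$ for every $\alpha$ (reading the same inclusions with shifted indices also gives the converse direction of the squeeze), and the entropy $h(\tms|Z)$ of arbitrary, not necessarily compact or invariant, subsets is indeed invariant under the self-conjugacy $\tau$, which closes the argument. Notice that you never use the Gibbs property, so your argument in fact proves the conclusion for arbitrary invariant measures carried into each other by a shift-commuting, measure-preserving homeomorphism. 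The paper does not argue this way: Theorem~\ref{thm:274} is quoted from \cite{Barreira}, and the paper's own proof of the stronger Theorem~\ref{thm:405} (which contains Theorem~\ref{thm:274}, since equivalence implies isomorphism) runs through the thermodynamic formalism --- by Theorem~\ref{thm:321} the spectrum is the Legendre transform of $\beta(q)=P(\tms,qf)-qP(\tms,f)$, and the equality of the two $\beta$-functions is extracted from the Jacobian identity of Lemma~\ref{lem:711} combined with the preimage-sum formula for the pressure (Lemma~\ref{lem:870}). The trade-off: your approach is more elementary and dispenses with the Gibbs hypothesis, but it leans entirely on the continuity of $\tau$ --- for a Borel isomorphism $\Phi$ defined only $\mu_f$-a.e.\ there is no sliding-block structure, and the level sets $E_{\alpha}^{(\mu)}$ are typically null sets, so pointwise control of them is lost --- whereas the pressure-based argument needs only almost-everywhere information and therefore extends to the isomorphism setting, which is the actual point of the paper.
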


Theorem \ref{thm:274} implies that $[f]_2\subset[f]_1$ for $f\in\mcC$.
Therefore, we have the following rigidity problem:
\begin{center}
\begin{minipage}{0.9\textwidth}
($\mathrm{SR}$).\ 
Identify the set $\{f\in\mcC:[f]_2=[f]_1\}$.
\end{minipage}
\end{center}
We call the problem ($\mathrm{SR}$) the \emph{strong rigidity problem} for $\mcC$.

In this paper, we prove the following extension of Theorem \ref{thm:274} implying $[f]_3\subset[f]_1$ for $f\in\mcC$:

\begin{theorem}
\label{thm:401}
Let $f,g:\TMS\to\R$ be H\"{o}lder continuous.
If $(\TMS,\tms,\mu_f)$ and $(\TMS,\tms,\mu_g)$ are isomorphic, then $\mcE^{(\mu_f)}=\mcE^{(\mu_g)}$.
\end{theorem}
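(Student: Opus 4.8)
The plan is to reduce, via the multifractal formalism, the equality of the entropy spectra to the equality of the \emph{normalized pressure functions} $\widetilde P_f(t):=P(tf)-tP(f)$ and $\widetilde P_g$ (where $P$ denotes topological pressure), and then to extract these functions from the isomorphism by means of the Jacobian, which is a genuine measure-theoretic invariant of a non-invertible measure-preserving transformation. For the reduction, let $J_f\colon\TMS\to(0,\infty)$ be the Jacobian of $\mu_f$ for $\tms$, i.e.\ $\mu_f(\tms A)=\int_A J_f\,d\mu_f$ whenever $\tms|_A$ is injective; for a Gibbs measure one may take $J_f=e^{P(f)-f}\cdot(h_f\circ\tms)/h_f$, where $h_f$ is the positive H\"{o}lder eigenfunction of the Ruelle operator of $f$, so that $\log J_f$ is H\"{o}lder and cohomologous to $P(f)-f$. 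The Gibbs property gives $-\log\mu_f([\omega_0\cdots\omega_{n-1}])=\sum_{j=0}^{n-1}\log J_f(\tms^{j}\omega)+O(1)$ uniformly in $\omega$ and $n$, so that $E_{\alpha}^{(\mu_f)}$ is exactly the Birkhoff level set $\{\omega:\lim_{n\to\infty}\tfrac1n\sum_{j=0}^{n-1}\log J_f(\tms^{j}\omega)=\alpha\}$. By the multifractal analysis of Birkhoff averages on a topologically mixing subshift of finite type, $\mcE^{(\mu_f)}(\alpha)=\inf_{t\in\R}(\widetilde P_f(t)+t\alpha)$ for $\alpha$ in the closed interval $\{\int\log J_f\,d\nu:\nu\in\mcM_{\inv}(\TMS)\}$, and $E_{\alpha}^{(\mu_f)}=\varnothing$ otherwise; since both this interval and the right-hand side are determined by $\widetilde P_f$, it suffices to prove $\widetilde P_f=\widetilde P_g$.

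Next I would show that the isomorphism $\Phi\colon X\to Y$ of Definition \ref{def:019} transports the Jacobian. Since $\Phi$ is a bimeasurable bijection between the $\tms$-invariant full-measure Borel sets $X$ and $Y$, commutes with $\tms$, and pushes $\mu_f$ forward to $\mu_g$, a change-of-variables argument on the injectivity domains $[i]\cap X$ of $\tms$ yields $J_f=J_g\circ\Phi$ $\mu_f$-almost everywhere. Because the $n$-step Jacobian $J_f^{(n)}:=\prod_{j=0}^{n-1}J_f\circ\tms^{j}$ is bounded, images of $\mu_f$-null sets under $\tms^{n}$ are again $\mu_f$-null; propagating this, one obtains a $\mu_f$-full $\tms$-invariant Borel set $\Omega_0\subseteq X$ such that, for every $\omega\in\Omega_0$ and every $m\ge0$, one has $\tms^{-m}(\omega)\subseteq X$, $\Phi$ commutes with $\tms$ at each point of $\tms^{-m}(\omega)$, and $J_f=J_g\circ\Phi$ on $\bigcup_{m\ge0}\tms^{-m}(\omega)$; consequently $J_f^{(n)}(\eta)=J_g^{(n)}(\Phi\eta)$ for every $n$ and every $\eta\in\tms^{-n}(\omega)$.

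In the last step I would fix $t\in\R$ and $\omega\in\Omega_0$ and compare transfer operators. Writing $\mathcal{L}_\psi^{n}\mathbf1(\eta)=\sum_{\xi\in\tms^{-n}(\eta)}e^{\sum_{j=0}^{n-1}\psi(\tms^{j}\xi)}$ for the iterated Ruelle operator applied to the constant $\mathbf1$, the previous step gives
\begin{align*}
\mathcal{L}_{-t\log J_f}^{n}\mathbf1(\omega)
&=\sum_{\eta\in\tms^{-n}(\omega)}\bigl(J_f^{(n)}(\eta)\bigr)^{-t}
=\sum_{\eta'\in\Phi(\tms^{-n}(\omega))}\bigl(J_g^{(n)}(\eta')\bigr)^{-t}\\
&\le\sum_{\eta'\in\tms^{-n}(\Phi\omega)}\bigl(J_g^{(n)}(\eta')\bigr)^{-t}
=\mathcal{L}_{-t\log J_g}^{n}\mathbf1(\Phi\omega),
\end{align*}
where the second equality uses $J_f^{(n)}=J_g^{(n)}\circ\Phi$ on $\tms^{-n}(\omega)$ and the injectivity of $\Phi$, and the inequality uses $\Phi(\tms^{-n}(\omega))\subseteq\tms^{-n}(\Phi\omega)$ (as $\Phi$ commutes with $\tms^{n}$) together with the positivity of the summands. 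Since $\log J_f$ and $\log J_g$ are H\"{o}lder, Ruelle's Perron--Frobenius theorem gives $\tfrac1n\log\mathcal{L}_\psi^{n}\mathbf1\to P(\psi)$ uniformly for $\psi=-t\log J_f$ and for $\psi=-t\log J_g$; letting $n\to\infty$ above yields $P(-t\log J_f)\le P(-t\log J_g)$, i.e.\ $\widetilde P_f(t)\le\widetilde P_g(t)$ because $-t\log J_f$ is cohomologous to $tf-tP(f)$. Running the same argument with the roles of $f$ and $g$ interchanged and $\Phi$ replaced by $\Phi^{-1}$ gives the reverse inequality, whence $\widetilde P_f=\widetilde P_g$, and the reduction completes the proof.

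I expect the main obstacle to be the second step. The identity $J_f=J_g\circ\Phi$ is a priori only an almost-everywhere statement relative to the reference measures $\mu_f,\mu_g$, and it must be promoted to something valid along the entire backward orbit of a typical point, so that it can be applied to each finite preimage set $\tms^{-n}(\omega)$ simultaneously for all $n$. This is exactly the point at which the non-invertibility of $\tms$ is exploited --- through the Jacobian and the boundedness of $J_f^{(n)}$ --- and it is what makes the transfer-operator comparison legitimate even though the level sets $E_{\alpha}^{(\mu_f)}$ are $\mu_f$-null. A lesser technical point is the behaviour of the multifractal formula at the endpoints of the spectrum's domain in the first step, but those values too are functions of $\widetilde P_f$ alone.
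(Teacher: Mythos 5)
Your proposal is correct and follows essentially the same route as the paper: reduce via the multifractal formalism to equality of the normalized pressure functions, transport the Jacobian through the isomorphism (the paper's Lemma \ref{lem:711}), pick a typical point whose entire backward orbit lies in the good set, compare preimage sums using injectivity and $\Phi\circ\tms=\tms\circ\Phi$, and conclude with the preimage-sum characterization of pressure (the paper's Lemma \ref{lem:870}). Your second step even spells out, via boundedness of the $n$-step Jacobian, the existence of the typical point with a good backward orbit, a detail the paper's proof states without elaboration.
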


Theorem \ref{thm:401} raises the following new rigidity problem:
\begin{center}
\begin{minipage}{0.9\textwidth}
($\mathrm{WR}$).\ 
Identify the set $\{f\in\mcC:[f]_3=[f]_1\}$.
\end{minipage}
\end{center}
We call the problem ($\mathrm{WR}$) the \emph{weak rigidity problem} for $\mcC$.

In \cite{Barreira}, Barreira and Saraiva gave a complete answer to the strong rigidity problem for $\mcC=L_2(\TMS;\R)$ when ${\bf A}$ is a $2\times 2$ zero-one aperiodic matrix.
In this paper, we give a complete answer to the weak rigidity problem for $\mcC=L_2(\TMS;\R)$, in the same set-up.
In fact, we prove that
\begin{equation}
\label{eq:272}
\{f\in\mcC:[f]_3=[f]_1\}=\{f\in\mcC:[f]_2=[f]_1\}
\end{equation}
for $\mcC=L_2(\TMS;\R)$ when ${\bf A}$ is a $2\times 2$ zero-one aperiodic matrix.

This paper is organized as follows.
In Section \ref{sec:preliminaries}, we give preliminary definitions and basic facts. 
Especially, the construction of Gibbs measures via transfer operators is reviewed.
In Section \ref{sec:main}, we prove Theorem \ref{thm:401}.
Moreover, we prove that (\ref{eq:272}) holds for $\mcC=L_2(\TMS;\R)$ when ${\bf A}$ is a $2\times 2$ zero-one aperiodic matrix; see Corollaries \ref{cor:732} and \ref{cor:558} below.
In Appendix \ref{app:non-rigidity}, we study the set $\{f\in\mcC:[f]_2=[f]_3\}$ for $\mcC=L_2(\TMS;\R)$.
We give a sufficient condition of ${\bf A}$ for which this set contains an open and dense subset of $L_2(\TMS;\R)$.
This yields an extension of a `non-rigidity' result of \cite{Barreira}; see Corollary \ref{cor:778} below.


\section{Preliminaries}
\label{sec:preliminaries}

An element of $\bigcup_{n\in\N\cup\{0\}}\{1,\dots,N\}^n$ is called a \emph{word}. 
For $n\in\N\cup\{0\}$ and a word $w\in\{1,\dots,N\}^n$, we write $|w|=n$.
Moreover, we write $w=w_0\cdots w_{|w|-1}$ for a word $w$, where $w_k\in\{1,...,N\},\ 0\le k\le|w|-1$. 
The \emph{empty word} is the unique word $w$ with $|w|=0$. 
A word $w$ is said to be ${\bf A}$-\emph{admissible} if $|w|\le1$ or if $|w|\ge2$ and ${\bf A}(w_{k}w_{k+1})=1$ for $0\le k\le|w|-1$. 
For $n\in\N\cup\{0\}$, we denote by $W_{{\bf A}}^n$ the set of all ${\bf A}$-admissible words $w$ with $|w|=n$.
For $\omega\in\TMS$ and $n\in\N\cup\{0\}$, we define the ${\bf A}$-admissible word $\omega|n$ with $|\omega|n|=n$ by
\[
\omega|n=\begin{cases}
\omega_{0}\cdots\omega_{n-1}&(n\ge1),\\
\mbox{the empty word}&(n=0).
\end{cases}
\]
Moreover, for $n\in\N\cup\{0\}$ and a word $w$ with $|w|=n$, we set
\[
[w]=\{\omega\in\TMS:\omega|n=w\}.
\]

For $\phi:\TMS\to\C$ and $n\in\N$, we write $S_n\phi=\sum_{k=0}^{n-1}\phi\circ\tms^k$.
We give the definition of a Gibbs measure as follows:

\begin{definition}
\label{def:042}
Let $\mu\in\mcM_{\inv}(\TMS)$ and let $f:\TMS\to\R$ be continuous.
We call $\mu$ a \emph{Gibbs measure} for $f$ if there exist $C>0$ and $P\in\R$ such that the following inequality holds for $\omega\in\TMS$ and $n\in\N$:
\[
C^{-1}\le\frac{\mu([\omega|n])}{\exp(-nP+S_nf(\omega))}\le C.
\]
\end{definition}
\begin{remark}
\label{rem:785}
Let $\mu$ be a Gibbs measure.
By Definition \ref{def:042}, we have $\mu(G)>0$ for any non-empty open subset $G$ of $\TMS$.
\end{remark}

We construct Gibbs measures for real-valued H\"{o}lder continuous functions, using transfer operators.
First, we define
\[
P(\tms,f)=\sup_{\mu\in\mcM_{\inv}(\tms)}\left(h_{\mu}(\tms)+\int f\,d\mu\right)
\]
for a continuous function $f:\TMS\to\R$.
Here, $h_{\mu}(\tms)$ denotes the Kolmogorov-Sinai entropy of $\mu\in\mcM_{\inv}(\TMS)$. 
It is easy to see that $P(\tms,f)$ is finite.
We call $P(\tms,f)$ the \emph{topological pressure} of $f$.

Next, we define a transfer operator.
For $\alpha>0$, we denote by $\mcH_{\alpha}(\TMS)$ the set of all complex-valued $\alpha$-H\"{o}lder continuous functions on $\TMS$.
We equip $\mcH_{\alpha}(\TMS)$ with the H\"{o}lder norm.
Then, $\mcH_{\alpha}(\TMS)$ is a complex Banach space.
For $f\in\mcH_{\alpha}(\TMS)$,
we define the bounded linear operator $\mcL_f:\mcH_{\alpha}(\TMS)\to \mcH_{\alpha}(\TMS)$ by  
\[
(\mcL_f\phi)(\omega)=\sum_{\omega^{\prime}\in\TMS:\,\tms\omega^{\prime}=\omega}e^{f(\omega^{\prime})}\phi(\omega^{\prime})
\]
and call it the \emph{transfer operator} of $f$.

For a continuous function $f:\TMS\to\R$, we put
\[
\lambda_f=e^{P(\tms,f)}.
\]
Moreover, we denote by $\mcM(\TMS)$ the set of all Borel probability measures on $\TMS$. 
The following theorem is called the Ruelle-Perron-Frobenius theorem (for the proof, see, e.g., \cite[Theorem 2.2]{Parry-Pollicott}).

\begin{theorem}
\label{thm:410}
Let $\alpha>0$ and let $f\in\mcH_{\alpha}(\TMS)$ be real-valued.
\begin{enumerate}
\item[(i)]$\lambda_f$ is an eigenvalue of $\mcL_f:\mcH_{\alpha}(\TMS)\to \mcH_{\alpha}(\TMS)$ with algebraic multiplicity one.
\item[(ii)]There exist $h_f\in\mcH_{\alpha}(\TMS)$ and $\nu_f\in\mcM(\TMS)$ satisfying the following two properties:
\begin{itemize}
\item$h_f>0$ and $\mcL_fh_f=\lambda_fh_f$.
\item$\int\mcL_f\phi\,d\nu_f=\lambda_f\int\phi\,d\nu_f$ for all $\phi\in \mcH_{\alpha}(\TMS)$.
\end{itemize}
\end{enumerate}
\end{theorem}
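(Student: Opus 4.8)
The plan is to construct the eigendata of $\mcL_f$ in stages: first the eigenmeasure $\nu_f$ by a fixed-point argument, then the eigenfunction $h_f$ by a compactness argument, then the identification of the eigenvalue with $\lambda_f$, and finally the algebraic simplicity together with the positivity claims via a projective-metric contraction. The technical engine throughout is a \emph{bounded-distortion} estimate: since $f$ is $\alpha$-H\"older, there is $D>0$ with $|S_n f(\omega)-S_n f(\omega')|\le D\, d(\tms^n\omega,\tms^n\omega')^{\alpha}$ whenever $\omega|n=\omega'|n$. This controls every ratio of values of $\mcL_f^n\mathbf{1}$ that appears below.

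First I would produce $\nu_f$ and a candidate eigenvalue $\lambda>0$ by applying the Schauder--Tychonoff theorem to the self-map of the weak-$*$ compact convex set $\mcM(\TMS)$ defined by $\nu\mapsto \mcL_f^{*}\nu/\big(\int\mcL_f\mathbf{1}\,d\nu\big)$, where $\mcL_f^{*}$ is the adjoint of $\mcL_f$ acting on measures. This map is weak-$*$ continuous and well defined because $\mcL_f\mathbf{1}>0$, so it has a fixed point $\nu_f$, and setting $\lambda=\int\mcL_f\mathbf{1}\,d\nu_f$ gives $\int\mcL_f\phi\,d\nu_f=\lambda\int\phi\,d\nu_f$ for all $\phi$, which is the second property in (ii).

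Next, to obtain $h_f$ I would study the normalized iterates $g_n=\lambda^{-n}\mcL_f^{n}\mathbf{1}$. The distortion estimate shows the $g_n$ are uniformly bounded above and below by positive constants and uniformly $\alpha$-H\"older; here the aperiodicity of ${\bf A}$ enters, guaranteeing that every point has enough $\tms^n$-preimages so that $\mcL_f^n\mathbf{1}$ stays bounded away from $0$ everywhere. By Arzel\`a--Ascoli a Ces\`aro average of the $g_n$ has a limit $h_f\in\mcH_\alpha(\TMS)$ with $h_f>0$ and $\mcL_f h_f=\lambda h_f$, normalized so that $\int h_f\,d\nu_f=1$. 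To see $\lambda=\lambda_f=e^{P(\tms,f)}$, I would compare $\lambda^n=\int\mcL_f^n\mathbf{1}\,d\nu_f$ with the partition functions $Z_n=\sum_{w\in W_{{\bf A}}^n}\exp(\sup_{\omega\in[w]}S_n f(\omega))$; the same estimate yields $C^{-1}Z_n\le\lambda^n\le C Z_n$, and the standard identification of $\lim_n\frac1n\log Z_n$ with the topological pressure $P(\tms,f)$ gives $\lambda=\lambda_f$.

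Finally, for the algebraic multiplicity-one claim in (i) I would pass to the normalized operator $\widetilde{\mcL}\phi=\lambda_f^{-1}h_f^{-1}\mcL_f(h_f\phi)$, a positive operator with $\widetilde{\mcL}\mathbf{1}=\mathbf{1}$, and show that for some $N$ the iterate $\widetilde{\mcL}^{N}$ maps the cone of positive $\alpha$-H\"older functions into a subset of finite diameter in Birkhoff's projective (Hilbert) metric. By Birkhoff's theorem $\widetilde{\mcL}^{N}$ is then a strict contraction in that metric, forcing the fixed ray through $h_f$ to be unique and producing a spectral gap; this pins down $\lambda_f$ as a simple isolated eigenvalue with the rest of the spectrum of strictly smaller modulus. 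I expect this last cone step to be the main obstacle: verifying that $\widetilde{\mcL}^{N}$ has finite projective diameter uses the aperiodicity of ${\bf A}$ in an essential way --- it is what lets any two symbols be joined and hence lets the oscillation of $\alpha$-H\"older functions be uniformly compressed --- and the distortion constant must be tracked carefully through the $N$-fold composition. The earlier stages are comparatively routine once the bounded-distortion estimate is in hand.
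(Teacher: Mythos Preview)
The paper does not prove this theorem at all: it states the Ruelle--Perron--Frobenius theorem as background and cites \cite[Theorem~2.2]{Parry-Pollicott} for the proof. So there is no in-paper argument to compare against.

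That said, your outline is a correct and standard route to the result. The Schauder--Tychonoff construction of $\nu_f$, the Arzel\`a--Ascoli/Ces\`aro construction of $h_f$ from the normalized iterates $\lambda^{-n}\mcL_f^n\mathbf{1}$, the identification $\lambda=\lambda_f$ via partition sums, and the Birkhoff-cone contraction for the spectral gap are all valid. Two small points worth tightening if you write it out in full: (1) to pass from the ratio bound on $(\mcL_f^n\mathbf{1})(\omega)/(\mcL_f^n\mathbf{1})(\omega')$ to a two-sided bound on $g_n$ itself you should use explicitly that $\int g_n\,d\nu_f=1$; (2) ``algebraic multiplicity one'' means the generalized eigenspace at $\lambda_f$ is one-dimensional, and you should say explicitly that the cone contraction yields a rank-one spectral projection at $\lambda_f$, which is exactly what is needed. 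The referenced Parry--Pollicott proof proceeds somewhat differently---it establishes the convergence of $\lambda_f^{-n}\mcL_f^n\phi$ more directly rather than via the Hilbert projective metric---but both approaches are classical and either would suffice here.
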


Using the Ruelle-Perron-Frobenius theorem, we can construct Gibbs measures as follows:

\begin{theorem}
\label{thm:014}
Let $\alpha>0$ and let $f\in \mcH_{\alpha}(\TMS)$ be real-valued.
Let also $h_f$ and $\nu_f$ be as in Theorem \ref{thm:410}, with $\int h_f\,d\nu_f=1$.
Then, the Borel probability measure $\mu$ on $\TMS$ defined by $d\mu=h_f\,d\nu_f$ is a unique Gibbs measure for $f$.
\end{theorem}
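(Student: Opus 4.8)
The plan is to check three things in turn: that $\mu$ is $\tms$-invariant, that it satisfies the Gibbs inequality of Definition \ref{def:042} with pressure constant $P=\log\lambda_f=P(\tms,f)$, and that it is the only Gibbs measure for $f$. Everything rests on the intertwining identity
\[
\mcL_f\big((\psi\circ\tms)\cdot\phi\big)=\psi\cdot\mcL_f\phi,
\]
valid for all continuous $\phi,\psi$, which is immediate from the definition of $\mcL_f$ since every $\tms$-preimage $\omega'$ of $\omega$ satisfies $\psi(\tms\omega')=\psi(\omega)$. For invariance I would apply the eigenmeasure relation of Theorem \ref{thm:410}(ii) to $\phi=(\psi\circ\tms)h_f$ and combine it with this identity and $\mcL_fh_f=\lambda_fh_f$ to obtain $\lambda_f\int(\psi\circ\tms)h_f\,d\nu_f=\int\psi\cdot\mcL_fh_f\,d\nu_f=\lambda_f\int\psi h_f\,d\nu_f$; cancelling $\lambda_f$ yields $\int\psi\circ\tms\,d\mu=\int\psi\,d\mu$ for every continuous $\psi$, so $\mu\in\mcM_{\inv}(\TMS)$, and $\int h_f\,d\nu_f=1$ makes $\mu$ a probability measure.

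For the Gibbs property I would estimate $\nu_f([w])$ for an admissible word $w$ with $|w|=n$. Since cylinders are clopen, $\mathbf{1}_{[w]}\in\mcH_\alpha(\TMS)$, so the iterated eigenmeasure relation $\int\phi\,d\nu_f=\lambda_f^{-n}\int\mcL_f^n\phi\,d\nu_f$ applies to it. A given $\omega$ has at most one $\tms^n$-preimage lying in $[w]$, namely the concatenation $w\omega$ (obtained by prepending $w$ to $\omega$), which exists exactly when ${\bf A}(w_{n-1}\omega_0)=1$; hence $(\mcL_f^n\mathbf{1}_{[w]})(\omega)=e^{S_nf(w\omega)}\mathbf{1}[{\bf A}(w_{n-1}\omega_0)=1]$. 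Hölder continuity of $f$ furnishes the bounded-distortion estimate $|S_nf(\xi)-S_nf(\xi')|\le C_f$ for all $\xi,\xi'\in[w]$, with $C_f$ independent of $w$ and $n$; together with $\lambda_f=e^{P(\tms,f)}$ this gives the upper bound $\nu_f([w])\le e^{C_f}e^{-nP(\tms,f)}e^{S_nf(\xi_w)}$ for any reference point $\xi_w\in[w]$. For the matching lower bound I would use aperiodicity to guarantee, for each symbol $i$, at least one $j$ with ${\bf A}(ij)=1$, together with a short argument applying the eigenmeasure relation to $\mathbf{1}_{[j]}$ and using aperiodicity to bound $\mcL_f^k\mathbf{1}_{[j]}$ below, which shows $\nu_f([j])>0$ for every symbol $j$; this bounds $\nu_f(\{\omega:{\bf A}(w_{n-1}\omega_0)=1\})$ below by a constant $c_0>0$ and yields $\nu_f([w])\ge e^{-C_f}c_0\,e^{-nP(\tms,f)}e^{S_nf(\xi_w)}$. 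As $h_f$ is continuous and strictly positive on the compact space $\TMS$, it satisfies $0<\min h_f\le h_f\le\max h_f<\infty$, so $\mu([w])=\int_{[w]}h_f\,d\nu_f$ is comparable to $\nu_f([w])$; taking $w=\omega|n$ and $\xi_w=\omega$ gives exactly the two-sided bound of Definition \ref{def:042}. I expect this paragraph to be the main obstacle, since producing a uniform lower bound is precisely where aperiodicity (via positivity of $\nu_f$ on cylinders) and bounded distortion are both indispensable.

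For uniqueness, let $\mu'$ be any Gibbs measure for $f$, with constants $C',P'$. Summing the Gibbs bounds over $w\in W_{\bf A}^n$ and using $\sum_w\mu'([w])=1$ identifies $P'$ with the growth rate $\lim_n\frac1n\log\sum_{w\in W_{\bf A}^n}e^{S_nf(\xi_w)}$, which is the topological pressure $P(\tms,f)$; the same computation for $\mu$ shows both Gibbs measures share the pressure constant $P(\tms,f)$. Hence $\mu([w])$ and $\mu'([w])$ are comparable uniformly in $w$, say $D^{-1}\mu([w])\le\mu'([w])\le D\mu([w])$ for some $D\ge1$. Writing an arbitrary open set as a countable disjoint union of cylinders and then invoking outer regularity upgrades this to $D^{-1}\mu(B)\le\mu'(B)\le D\mu(B)$ for all Borel sets $B$; in particular $\mu'\ll\mu$. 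Finally, $\mu$ is ergodic, since the leading eigenvalue $\lambda_f$ is simple and dominates the rest of the spectrum in the Ruelle--Perron--Frobenius theorem, so $\mu$ is mixing; and a $\tms$-invariant probability measure absolutely continuous with respect to an ergodic measure must coincide with it. Indeed, by Birkhoff's theorem $\frac1nS_ng\to\int g\,d\mu$ holds $\mu$-a.e., hence $\mu'$-a.e., so by bounded convergence and $\tms$-invariance of $\mu'$ one gets $\int g\,d\mu'=\int g\,d\mu$ for every continuous $g$. Therefore $\mu'=\mu$, which completes the proof.
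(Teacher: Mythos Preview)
Your argument is correct and is essentially the standard proof: establish invariance from the eigen-relations, obtain the two-sided Gibbs estimate via bounded distortion and positivity of $\nu_f$ on cylinders, and deduce uniqueness from ergodicity of $\mu$ together with mutual absolute continuity of any two Gibbs measures. The paper itself does not prove Theorem~\ref{thm:014} but simply cites \cite[Corollary~3.2.1]{Parry-Pollicott}, where the same line of reasoning is carried out.
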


For the proof of Theorem \ref{thm:014}, see, e.g., \cite[Corollary 3.2.1]{Parry-Pollicott}.
For a H\"{o}lder continuous function $f:\TMS\to\R$, we denote by $\mu_f$ the unique Gibbs measure for $f$.

For $\nu\in\mcM(\TMS)$, we say that $\nu$ is \emph{non-singular} if $\nu\circ\tms^{-1}$ is absolutely continuous with respect to $\nu$.
Let $\nu\in\mcM(\TMS)$ be non-singular.
For $n\in\N$, we define the finite Borel measure $\nu^{(n)}$ on $\TMS$ by
\[
\nu^{(n)}(B)=\sum_{w\in\{1,\dots,N\}^n}\nu(\tms^n(B\cap[w])).
\]
It is easy to see that $\nu$ is absolutely continuous with respect to $\nu^{(n)}$.
The Radon-Nikodym derivative
\[
J_{\nu}^{(n)}=\frac{d\nu}{d\nu^{(n)}}
\]
is called the $n$th \emph{Jacobian} of $\nu$.
The following lemma plays a key role in the proof of Theorem \ref{thm:401}.

\begin{lemma}
\label{lem:921}
Let $f:\TMS\to\R$ be H\"{o}lder continuous and $\nu_f$ as in Theorem \ref{thm:410}.
Then, $\nu_f$ is non-singular.
Moreover, for $n\in\N$, we have 
\[
J_{\nu_f}^{(n)}=\lambda_f^{-n}\exp(S_nf).
\]
\end{lemma}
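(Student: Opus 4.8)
The plan is to derive everything from the defining eigen-equation for $\nu_f$, namely $\int\mcL_f\phi\,d\nu_f=\lambda_f\int\phi\,d\nu_f$ for $\phi\in\mcH_\alpha(\TMS)$, after first promoting it to all bounded Borel test functions. Note that $\mcL_f$ maps bounded Borel functions to bounded Borel functions, since $(\mcL_f\phi)(\omega)=\sum_{a:\,{\bf A}(a\omega_0)=1}e^{f(a\omega)}\phi(a\omega)$ is a finite sum whose index set is locally constant in $\omega$; moreover $\mcL_f$ restricts to a bounded positive operator on $C(\TMS)$. The assignment $B\mapsto\int\mcL_f\mathbf{1}_B\,d\nu_f$ is then a finite Borel measure $\mcL_f^*\nu_f$ on $\TMS$ (countable additivity because $\mcL_f$ commutes with bounded monotone limits, finiteness because $\mcL_f\mathbf{1}$ is a bounded function). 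Since $\mcH_\alpha(\TMS)$ is uniformly dense in $C(\TMS)$, the two finite Borel measures $\mcL_f^*\nu_f$ and $\lambda_f\nu_f$ integrate every $\phi\in C(\TMS)$ to the same value, hence coincide, so $\int\mcL_f\phi\,d\nu_f=\lambda_f\int\phi\,d\nu_f$ for all bounded Borel $\phi$. Iterating this and using the elementary identities $(\mcL_f^n\phi)(\omega)=\sum_{\tms^n\omega'=\omega}e^{S_nf(\omega')}\phi(\omega')$ and $\mcL_f^n((\psi\circ\tms^n)\phi)=\psi\,\mcL_f^n\phi$, I obtain $\int\mcL_f^n\phi\,d\nu_f=\lambda_f^n\int\phi\,d\nu_f$ for every $n\in\N$ and every bounded Borel $\phi$.

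For non-singularity, let $A$ be Borel with $\nu_f(A)=0$. Applying the identity above (with $n=1$) to $\phi=(\mathbf{1}_A\circ\tms)\,h_f$ and using $\mcL_f((\mathbf{1}_A\circ\tms)h_f)=\mathbf{1}_A\,\mcL_fh_f=\lambda_f\,\mathbf{1}_A h_f$ gives $\int_{\tms^{-1}A}h_f\,d\nu_f=\int_A h_f\,d\nu_f=0$. As $h_f$ is continuous and strictly positive on the compact space $\TMS$, it is bounded below by a positive constant, so $\nu_f(\tms^{-1}A)=0$; hence $\nu_f\circ\tms^{-1}\ll\nu_f$.

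The core computation is a conformality relation: for $n\in\N$, an admissible word $w$ with $|w|=n$, and bounded Borel $\psi$,
\[
\int_{[w]}\psi\,d\nu_f=\lambda_f^{-n}\int_{\tms^n([w])}e^{S_nf(w\omega)}\,\psi(w\omega)\,d\nu_f(\omega),
\]
where for $\omega\in\tms^n([w])$ I write $w\omega$ for the unique $\tms^n$-preimage of $\omega$ lying in $[w]$; equivalently $\tms^n|_{[w]}$ is a bi-measurable bijection of $[w]$ onto the clopen set $\tms^n([w])$ with inverse $\omega\mapsto w\omega$. This follows by applying $\int\mcL_f^n(\cdot)\,d\nu_f=\lambda_f^n\int(\cdot)\,d\nu_f$ to $\mathbf{1}_{[w]}\psi$ and observing that at most one $\tms^n$-preimage of a point $\omega$ lies in $[w]$, so $\mcL_f^n(\mathbf{1}_{[w]}\psi)(\omega)=\mathbf{1}_{\tms^n([w])}(\omega)\,e^{S_nf(w\omega)}\,\psi(w\omega)$.

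Finally I would unwind the definition of $\nu_f^{(n)}$. Writing $T_w=\tms^n|_{[w]}$, the formula $\nu_f^{(n)}(B)=\sum_{|w|=n}\nu_f(\tms^n(B\cap[w]))$ exhibits $\nu_f^{(n)}$ as $\sum_{|w|=n}(T_w^{-1})_*\big(\nu_f|_{\tms^n([w])}\big)$, the sum ranging over admissible $w$ (empty cylinders contributing nothing), whence $\int g\,d\nu_f^{(n)}=\sum_{|w|=n}\int_{\tms^n([w])}g(w\omega)\,d\nu_f(\omega)$ for bounded Borel $g$. Taking $g=\mathbf{1}_B\,\lambda_f^{-n}\exp(S_nf)$ and invoking the conformality relation termwise with $\psi=\mathbf{1}_B$ yields $\int_B\lambda_f^{-n}\exp(S_nf)\,d\nu_f^{(n)}=\sum_{|w|=n}\nu_f(B\cap[w])=\nu_f(B)$, since $\TMS$ is the disjoint union of the cylinders $[w]$ with $|w|=n$. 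This simultaneously shows $\nu_f\ll\nu_f^{(n)}$ and identifies $J_{\nu_f}^{(n)}=d\nu_f/d\nu_f^{(n)}=\lambda_f^{-n}\exp(S_nf)$. The only step needing genuine care is the passage from H\"older to bounded Borel test functions in the eigen-equation; everything else is bookkeeping with the inverse branches of $\tms^n$.
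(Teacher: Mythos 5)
Your proof is correct. The paper itself omits the proof of this lemma (it is stated as "easy"), and your argument is the standard one: extend the eigen-equation $\int\mcL_f\phi\,d\nu_f=\lambda_f\int\phi\,d\nu_f$ from H\"older to bounded Borel test functions, use $\mcL_f((\psi\circ\tms)\phi)=\psi\,\mcL_f\phi$ for non-singularity, and read off the Jacobian from the $n$-fold conformality relation on cylinders via the inverse branches of $\tms^n$. The one genuinely delicate step, passing to bounded Borel test functions (via Riesz uniqueness and monotone approximation), is handled correctly, so nothing is missing.
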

The proof of Lemma \ref{lem:921} is easy, so we omit it.

For 2-locally constant functions, we can give a more specific description of their Gibbs measures, using matrix theory.
We denote by $M$ the set of all $N\times N$ non-negative matrices $A$ such that, for any $i,j\in\{1,\dots,N\}$, $A_{ij}>0$ if and only if ${\bf A}_{ij}=1$.
For $a=(a_1,\dots,a_N)\in\R^N$, we denote by $a^{\mrmT}$ the transpose of $a$.

\begin{theorem}
\label{thm:790}
For $A\in M$, the following two assertions hold:
\begin{enumerate}
\item[(i)]There exists a unique positive eigenvalue $\lambda$ of $A$ such that $\lambda>|\eta|$ for any other eigenvalue $\eta\in\C$ of $A$.
Moreover, $\lambda$ has algebraic multiplicity one.
\item[(ii)]Let $\lambda$ be as in (i).
Then, there exist $p,v\in\R^N$ satisfying the following two properties:
\begin{itemize}
\item$p_i>0$ and $v_i>0$ for $i\in\{1,\dots,N\}$.
\item$p$ and $v^{\mrmT}$ are left and right eigenvectors of $\lambda$, respectively.
\end{itemize}
\end{enumerate}
\end{theorem}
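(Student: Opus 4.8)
The plan is to recognize Theorem~\ref{thm:790} as the Perron--Frobenius theorem for primitive matrices and to deduce it from the classical Perron theorem for strictly positive matrices. First I would note that every $A\in M$ is \emph{primitive}: if $k$ is a positive integer with all entries of ${\bf A}^k$ positive, then all entries of $A^k$ are positive as well. Indeed, writing $(A^k)_{ij}=\sum A_{i i_1}A_{i_1 i_2}\cdots A_{i_{k-1}j}$ and using that $A$ is non-negative with $A_{ab}>0$ exactly when ${\bf A}_{ab}=1$, the $(i,j)$ summand is positive precisely when the associated length-$k$ path from $i$ to $j$ is ${\bf A}$-admissible, and $({\bf A}^k)_{ij}>0$ guarantees at least one such path.

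Next I would set $B=A^k$ and let $r=\rho(B)>0$ denote its spectral radius. Since $B$ has strictly positive entries, the classical Perron theorem applies: $r$ is an eigenvalue of $B$ of algebraic multiplicity one, it admits a strictly positive eigenvector $v_B$, and every other eigenvalue of $B$ has modulus strictly less than $r$. (If one prefers a self-contained argument, $r$ and $v_B$ can be produced by maximizing $x\mapsto\min_i(Bx)_i/x_i$ over the standard simplex.) I expect this to be the only substantial step; everything afterwards is bookkeeping that transfers these facts from $B$ to $A$, and the point requiring care is that it is precisely the aperiodicity of ${\bf A}$ --- through the primitivity of $B$ --- that rules out other eigenvalues of $A$ of maximal modulus.

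Then I would put $\lambda=r^{1/k}$, so that $\lambda=\rho(A)>0$, and define $w=\sum_{j=0}^{k-1}\lambda^{-j}A^jv_B$. A direct computation gives $Aw=\lambda w$, while $w>0$ because $v_B>0$ and each $A^j$ is non-negative; hence $\lambda$ is a positive eigenvalue of $A$ with a strictly positive eigenvector, and $Bw=A^kw=rw$ shows that $w$ is, up to a positive scalar, the Perron eigenvector of $B$. For the remaining claims in~(i): if $\eta$ is an eigenvalue of $A$ with eigenvector $u$ and $|\eta|\ge\lambda$, then $Bu=\eta^k u$ with $|\eta^k|\ge r=\rho(B)$, so by the Perron theorem $\eta^k=r$ and $u$ is a scalar multiple of $v_B$, hence of $w$; then $\lambda w=Aw=\eta w$ forces $\eta=\lambda$. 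Since the eigenvalues of $B=A^k$ are the $k$-th powers of those of $A$ with algebraic multiplicities preserved, and $\lambda$ is the only eigenvalue of $A$ whose $k$-th power equals $r$, the algebraic multiplicity of $\lambda$ for $A$ equals that of $r$ for $B$, which is one.

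Finally, for~(ii) I would apply the same reasoning to $A^{\mrmT}$, which is again primitive since $(A^{\mrmT})^k=(A^k)^{\mrmT}$ has positive entries and has the same spectral radius $\lambda$. Running the construction for $A^{\mrmT}$ yields a strictly positive vector that is a left eigenvector $p$ of $A$ for $\lambda$, while $v^{\mrmT}=w$ provides the positive right eigenvector, as required.
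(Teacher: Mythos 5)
Your proposal is correct. Note, though, that the paper does not prove Theorem~\ref{thm:790} at all: it is the Perron--Frobenius theorem for primitive non-negative matrices, quoted with a reference to Seneta. What you supply is the standard reduction of the primitive case to the strictly positive case: primitivity of $A$ follows from aperiodicity of ${\bf A}$ because the sign pattern of $A^k$ matches that of ${\bf A}^k$; the Perron data of $B=A^k$ is transferred back to $A$ via the averaged vector $w=\sum_{j=0}^{k-1}\lambda^{-j}A^jv_B$, which indeed satisfies $Aw=\lambda w$, $w>0$, and $Bw=rw$; dominance and simplicity of $\lambda$ for $A$ follow from the corresponding facts for $r=\lambda^k$ and the fact that the spectrum of $A^k$ (with algebraic multiplicities, e.g.\ by Schur triangularization over $\C$) consists of the $k$-th powers of the spectrum of $A$, together with your observation that $\lambda$ is the only eigenvalue of $A$ whose $k$-th power is $r$; and part~(ii) follows by running the argument for $A^{\mrmT}$, which is legitimate since your argument only uses non-negativity and positivity of the $k$-th power, not membership in $M$. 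The only ingredient you take as a black box is the classical Perron theorem for strictly positive matrices (simplicity, positivity of the eigenvector, strict dominance), which is a weaker and more elementary input than the cited primitive-matrix statement, so your route is a reasonable way to make the paper's citation self-contained; if you wanted it fully self-contained you would still need to prove simplicity and strict dominance for positive matrices, which the Collatz--Wielandt maximization you mention gives existence for but does not by itself establish.
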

For the proof of Theorem \ref{thm:790}, see, e.g., \cite[Theorem 1.1]{Seneta}.
We denote by $\lambda(A)$ the above unique eigenvalue $\lambda$ of $A$ and call it the \emph{Perron root} of $A$.

For $P\in M$, we say that $P$ is a \emph{stochastic matrix} if $\sum_{j=1}^NP_{ij}=1,\ j\in\{1,\dots,N\}$.
Notice that, for any stochastic matrix $P\in M$, the Perron root of $P$ is 1.

\begin{definition}
Let $P\in M$ be a stochastic matrix and $p\in\R^N$ the unique vector  
such that $pP=p$ and $\sum_{i=1}^Np_i=1$.
The \emph{Markov measure} associated with $P$ is the Borel probability measure $\mu$ on $\TMS$ defined by
\[
\mu([w])=p_{w_0}P(w_{0}w_{1})P(w_{1}w_{2})\cdots P(w_{n-2}w_{n-1}),
\]
where $n\in\N\cup\{0\}$ and $w\in\{1,\dots,N\}^n$.
\end{definition}

Let $f\in L_2(\TMS;\R)$.
We define  the matrix $A(f)\in M$ by
\begin{equation}
\label{eq:787}
A(f)_{ij}=\begin{cases}
\exp(f|_{[ij]})&({\bf A}_{ij}=1),\\
0&({\bf A}_{ij}=0).
\end{cases}
\end{equation}
Then, $\lambda_f=\lambda(A(f))$.
Take $v\in\R^N$ so that $v^{\mrmT}$ is a right eigenvector of $\lambda_f$.
We define the probability matrix $P(f)\in M$ by
\[
P(f)_{ij}=\lambda_f^{-1}v_i^{-1}v_jA(f)_{ij}.
\]
The ratio $v_i^{-1}v_j$ is positive and is independent of the choice of $v$.
The following proposition is easy to prove and gives a description of $\mu_f$ as a Markov measure.

\begin{proposition}
Let $f\in L_2(\TMS;\R)$.
Then, the Markov measure associated with $P(f)$ is the unique Gibbs measure for $f$.
\end{proposition}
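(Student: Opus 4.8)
The plan is to verify directly that the Markov measure $\mu$ associated with $P(f)$ satisfies the Gibbs inequality of Definition \ref{def:042} for $f$ with constant $P(\tms,f)$; the uniqueness part of Theorem \ref{thm:014} then forces $\mu=\mu_f$. Write $A=A(f)$, $\lambda=\lambda_f$, and let $v\in\R^N$ be as in the statement, so that $\sum_{j}A_{ij}v_j=\lambda v_i$ for every $i$. I would first dispose of the routine points: $\sum_{j}P(f)_{ij}=\lambda^{-1}v_i^{-1}\sum_{j}v_jA_{ij}=1$, so $P(f)$ is stochastic, while $P(f)_{ij}>0\iff A_{ij}>0\iff{\bf A}_{ij}=1$, so $P(f)\in M$. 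If $p\in\R^N$ denotes the stationary probability vector, $pP(f)=p$, then $\sum_i p_iP(f)(iw_0)=p_{w_0}$ gives $\mu(\tms^{-1}[w])=\mu([w])$ for every ${\bf A}$-admissible word $w$, hence $\mu\in\mcM_{\inv}(\TMS)$ since cylinders generate the Borel $\sigma$-algebra.

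The heart of the argument is an explicit computation of $\mu([\omega|n])$. Fix $\omega\in\TMS$ and $n\ge 2$, and put $w=\omega|n$. Substituting the definition of $P(f)$ into $\mu([w])=p_{w_0}\prod_{k=0}^{n-2}P(f)(w_kw_{k+1})$, the factors $v_{w_k}^{-1}v_{w_{k+1}}$ telescope and leave
\[
\mu([w])=p_{w_0}\,\lambda^{-(n-1)}\,v_{w_0}^{-1}v_{w_{n-1}}\prod_{k=0}^{n-2}A(w_kw_{k+1})
=p_{w_0}\,\lambda^{-(n-1)}\,v_{w_0}^{-1}v_{w_{n-1}}\exp\Bigl(\sum_{k=0}^{n-2}f|_{[w_kw_{k+1}]}\Bigr).
\]
Since $f$ is $2$-locally constant, $f(\tms^k\omega)=f|_{[\omega_k\omega_{k+1}]}=f|_{[w_kw_{k+1}]}$ for $0\le k\le n-2$, so $\sum_{k=0}^{n-2}f|_{[w_kw_{k+1}]}=S_nf(\omega)-f(\tms^{n-1}\omega)$; combining this with $\lambda=\lambda_f=e^{P(\tms,f)}$ yields
\[
\frac{\mu([\omega|n])}{\exp\bigl(-nP(\tms,f)+S_nf(\omega)\bigr)}=e^{P(\tms,f)}\cdot\frac{p_{w_0}v_{w_{n-1}}}{v_{w_0}}\cdot e^{-f(\tms^{n-1}\omega)}.
\]

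To finish, I would bound the right-hand side uniformly: the indices $w_0,w_{n-1}$ lie in the finite set $\{1,\dots,N\}$ with $p_i,v_i>0$, so $p_{w_0}v_{w_{n-1}}/v_{w_0}$ is squeezed between two positive constants, and $f$ is bounded on the compact space $\TMS$, so $e^{-f(\tms^{n-1}\omega)}$ is likewise bounded above and below by positive constants; the case $n=1$ reduces to $\mu([w_0])=p_{w_0}$, to which the same estimate applies. Thus there is $C>0$ with $C^{-1}\le\mu([\omega|n])/\exp(-nP(\tms,f)+S_nf(\omega))\le C$ for all $\omega$ and $n$, so $\mu$ is a Gibbs measure for $f$, and Theorem \ref{thm:014} gives $\mu=\mu_f$. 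I do not expect a genuine obstacle; the only slightly delicate feature is the boundary term $f(\tms^{n-1}\omega)$, which does not telescope away because $f$ depends on one coordinate beyond those fixed by $\omega|n$, but being uniformly bounded it is simply absorbed into the Gibbs constant $C$.
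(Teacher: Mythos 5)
Your proof is correct: the telescoping computation of $\mu([\omega|n])$ via the eigenvector ratios, the uniform bounds on $p_{w_0}v_{w_{n-1}}/v_{w_0}$ and on the boundary term $e^{-f(\tms^{n-1}\omega)}$, and the appeal to uniqueness in Theorem \ref{thm:014} together give exactly the Gibbs property with $P=P(\tms,f)$. The paper omits the proof as routine, and your direct verification is the standard argument evidently intended (it also correctly handles the two points one could slip on: shift-invariance of the Markov measure and the non-telescoping term $f(\tms^{n-1}\omega)$).
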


Let $f:\TMS\to\R$ be H\"{o}lder continuous.
Using topological pressures, we give a description of the entropy spectrum of the Gibbs measure $\mu_f$.
We define the function $\beta:\R\to\R$ by
\[
\beta(q)=P(\tms,qf)-qP(\tms,f).
\]

\begin{theorem}
\label{thm:321}
For a H\"{o}lder continuous function $f:\TMS\to\R$, the following three assertions hold:
\begin{enumerate}
\item[(i)] $\beta$ is strictly decreasing, convex and real analytic. 
\item[(ii)] Let $\alpha_{\min}=-\lim_{q\to\infty}\beta^{\prime}(q)$ and $\alpha_{\max}=-\lim_{q\to-\infty}\beta^{\prime}(q)$.
Then, $0<\alpha_{\min}\le\alpha_{\max}<\infty$.
\item[(iii)]For $\alpha\in\R$, if $\alpha\notin[\alpha_{\min},\alpha_{\max}]$, then $\mcE^{(\mu_f)}(\alpha)=0$, and if $\alpha\in[\alpha_{\min},\alpha_{\max}]$, then 
\[
\mcE^{(\mu_f)}(\alpha)=\inf_{q\in\R}(\beta(q)+q\alpha).
\]
\end{enumerate}
\end{theorem}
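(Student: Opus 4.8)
The plan is to read Theorem~\ref{thm:321} as the multifractal (conditional variational) principle for the Birkhoff averages of $f$, after the change of variable forced by the Gibbs property of $\mu_f$. First I would note that the constant $P$ in Definition~\ref{def:042} for $\mu_f$ must equal $P(\tms,f)$, so that
\[
\Bigl|-\tfrac1n\log\mu_f([\omega|n]) - \bigl(P(\tms,f) - \tfrac1n S_nf(\omega)\bigr)\Bigr| \le \tfrac1n\log C \longrightarrow 0
\]
uniformly in $\omega$. Hence, writing $a = a(\alpha) := P(\tms,f) - \alpha$,
\[
E_\alpha^{(\mu_f)} = L_a := \Bigl\{\omega\in\TMS : \lim_{n\to\infty}\tfrac1n S_nf(\omega) = a\Bigr\},
\]
and, since $\beta(q) + q\alpha = P(\tms,qf) - qa$ for every $q$, assertion~(iii) becomes the statement that $L_a = \emptyset$ (so $h(\tms|L_a) = 0$) unless $a$ is an attainable Birkhoff average, in which case $h(\tms|L_a) = \inf_{q\in\R}\bigl(P(\tms,qf) - qa\bigr)$.

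For~(i) I would use that $q\mapsto P(\tms,qf)$ is convex, being the supremum over $\mu\in\mcM_{\inv}(\TMS)$ of the affine maps $q\mapsto h_\mu(\tms) + q\int f\,d\mu$ (variational principle), and real-analytic, since $P(\tms,qf) = \log\lambda_{qf}$ with $\lambda_{qf}$ the algebraically simple isolated leading eigenvalue of $\mcL_{qf}$ (Theorem~\ref{thm:410}(i)) and $q\mapsto\mcL_{qf}$ analytic; so $\beta$ is convex and real-analytic. For strict monotonicity I would compute $\beta'(q) = \int f\,d\mu_{qf} - P(\tms,f)$ (where $\mu_{qf}$ is the Gibbs measure of $qf$) and invoke the variational principle together with positivity of the entropy of a Gibbs measure, $P(\tms,f) \ge h_{\mu_{qf}}(\tms) + \int f\,d\mu_{qf} > \int f\,d\mu_{qf}$, giving $\beta' < 0$. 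For~(ii), convexity makes $\beta'$ nondecreasing, with finite one-sided limits at $\pm\infty$ because $|\int f\,d\mu_{qf}|\le\|f\|_\infty$; the zero-temperature limits $\lim_{q\to+\infty}\int f\,d\mu_{qf} = \max_\mu\int f\,d\mu$ and $\lim_{q\to-\infty}\int f\,d\mu_{qf} = \min_\mu\int f\,d\mu$ give $\alpha_{\min} = P(\tms,f) - \max_\mu\int f\,d\mu$ and $\alpha_{\max} = P(\tms,f) - \min_\mu\int f\,d\mu$, which are finite with $\alpha_{\min}\le\alpha_{\max}$, and $\alpha_{\min} > 0$ because $P(\tms,f) > \max_\mu\int f\,d\mu$ (a standard fact reflecting that the shift has positive topological entropy).

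For~(iii): if $\alpha\notin[\alpha_{\min},\alpha_{\max}]$ then $a(\alpha)$ lies outside $\bigl[\min_\mu\int f\,d\mu,\ \max_\mu\int f\,d\mu\bigr]$, while every limit point of $(\tfrac1n S_nf(\omega))_n$ equals $\int f\,d\mu$ for some empirical (invariant) $\mu$; hence $L_{a(\alpha)} = \emptyset$ and $\mcE^{(\mu_f)}(\alpha) = 0$. For $\alpha\in[\alpha_{\min},\alpha_{\max}]$, set $a = a(\alpha)$ and prove the two matching bounds. Upper bound: for each fixed $q$, split $L_a$ by the rate of convergence, cover each piece by cylinders $[\omega|n]$ on which $S_nf$ is within $\varepsilon n$ of $an$, and use $\mu_{qf}([\omega|n])\asymp e^{-nP(\tms,qf)+qS_nf(\omega)}$ together with $\sum_w\mu_{qf}([w]) \le 1$ to estimate the Carath\'eodory sums; letting $\varepsilon\to0$ gives $h(\tms|L_a)\le P(\tms,qf) - qa$, hence $\le\inf_q(P(\tms,qf) - qa)$. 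Lower bound: when $a$ lies in the interior, the strictly increasing analytic function $q\mapsto\frac{d}{dq}P(\tms,qf) = \int f\,d\mu_{qf}$ equals $a$ at a unique $q_0$; the Gibbs measure $\mu_{q_0f}$ is ergodic, so $\mu_{q_0f}(L_a) = 1$ by Birkhoff's theorem and therefore $h(\tms|L_a)\ge h_{\mu_{q_0f}}(\tms) = P(\tms,q_0f) - q_0a$, and $q_0$ being the critical point of the convex function $q\mapsto P(\tms,qf) - qa$ identifies this with $\inf_q(P(\tms,qf) - qa)$. The endpoints $a\in\{\min\int f,\ \max\int f\}$ (including the degenerate case in which $f$ is cohomologous to a constant, where $[\alpha_{\min},\alpha_{\max}]$ collapses to a point and $L_a = \TMS$) are handled by approximating $\sup\{h_\mu(\tms) : \int f\,d\mu = a\} = \lim_{q\to\pm\infty}(P(\tms,qf) - qa)$ by ergodic measures, each of which is carried by $L_a$.

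The step that needs genuine care, rather than bookkeeping, is the lower bound in~(iii): one works with the Carath\'eodory--Bowen topological entropy of the non-compact set $L_a$ and must justify $h(\tms|Z)\ge h_\mu(\tms)$ whenever $\mu(Z) = 1$, and at the endpoints $\alpha\in\{\alpha_{\min},\alpha_{\max}\}$ an ergodic-decomposition argument is needed to realize the supremum $\sup\{h_\mu(\tms) : \int f\,d\mu = a\}$ by ergodic measures. A shorter route that avoids these points is to quote the conditional variational principle for Birkhoff spectra on subshifts of finite type (Barreira--Saussol--Schmeling, Takens--Verbitskiy, Fan--Feng--Wu, and related work) and transport it through the reduction $E_\alpha^{(\mu_f)} = L_{a(\alpha)}$; the remaining inputs --- analyticity and differentiability of the pressure, ergodicity and positivity of entropy of Gibbs measures --- are standard consequences of Theorems~\ref{thm:410} and~\ref{thm:014} and the Gibbs property.
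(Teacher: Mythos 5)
The paper does not prove Theorem~\ref{thm:321} at all: it is quoted as a known result with a pointer to Pesin's book (Theorem~A4.2 there). Your proposal is, in outline, exactly the standard argument that the cited reference supplies: reduce the entropy spectrum to the Birkhoff spectrum of $f$ via the Gibbs property, get convexity and analyticity of $q\mapsto P(\tms,qf)$ from the variational principle and the spectral gap of $\mcL_{qf}$, prove the upper bound by covering with $\mu_{qf}$-cylinders, and the lower bound via the ergodic Gibbs measure $\mu_{q_0f}$ together with Bowen's inequality $h(\tms|Z)\ge h_\mu(\tms)$ for ergodic $\mu$ with $\mu(Z)=1$, treating the endpoints by ergodic decomposition. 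One place where your justification is thinner than the claim: $\alpha_{\min}>0$ amounts to $P(\tms,f)>\max_{\mu}\int f\,d\mu$, and this does not follow from positive topological entropy alone when every maximizing measure has zero entropy; you need in addition the analyticity (or a strict-convexity/cohomology-to-constant dichotomy) of $q\mapsto P(\tms,qf)$ — e.g.\ note that $\phi(q)=P(\tms,qf)-q\max_\mu\int f\,d\mu$ is nonnegative, nonincreasing and analytic, so $\phi(1)=0$ would force $\phi\equiv0$, contradicting $\phi(0)=h(\tms|\TMS)>0$. With that point repaired, the sketch is sound and coincides with the proof the paper relies on by citation.
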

For the proof of Theorem \ref{thm:321}, see, e.g., \cite[Theorem A4.2]{Pesin}.


\section{Proof of Theorem \ref{thm:401}}
\label{sec:main}

In this section, we prove Theorem \ref{thm:401}.
In fact, we prove a stronger version of this theorem.
To this end, we extend Definition \ref{def:019} to the case in which $g$ may be defined on another topological Markov shift $(\bTMS,\btms)$. 

\begin{definition}
\label{def:039}
Let $f:\TMS\to\R$ be H\"{o}lder continuous.
Moreover, let $(\bTMS,\btms)$ be another topological Markov shift and $g:\bTMS\to\R$ H\"{o}lder continuous.
We say that the two measure-preserving dynamical systems $(\TMS,\tms,\mu_f)$ and $(\bTMS,\btms,\mu_g)$ are \emph{isomorphic} if there exist two Borel sets $X\subset\TMS,\,Y\subset\bTMS$ and a bijection $\Phi:X\to Y$ such that the following four conditions hold:
\begin{itemize}
\item$\tms(X)\subset X,\ \btms(Y)\subset Y$ and $\mu_f(X)=\mu_g(Y)=1$.
\item Both $\Phi:X\to Y$ and $\Phi^{-1}:Y\to X$ are Borel measurable. 
\item$\Phi\circ\tms=\btms\circ\Phi\ \mbox{on}\ X$.
\item$\mu_f=\mu_g\circ \Phi$.
\end{itemize}
\end{definition}
The map $\Phi:X\to Y$ in Definition \ref{def:039} is called an \emph{isomorphism} from $(\TMS,\tms,\mu_f)$ to $(\bTMS,\btms,\mu_g)$.

The aim of this section is to prove the following theorem, which is a stronger version of Theorem \ref{thm:401}.

\begin{theorem}
\label{thm:405}
Let $f:\TMS\to\R$ be H\"{o}lder continuous.
Moreover, let $(\bTMS,\btms)$ be another topological Markov shift and $g:\bTMS\to\R$ H\"{o}lder continuous.
If $(\TMS,\tms,\mu_f)$ and $(\bTMS,\btms,\mu_g)$ are isomorphic, then $\mcE^{(\mu_f)}=\mcE^{(\mu_g)}$.
\end{theorem}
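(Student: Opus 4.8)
The plan is to reduce everything to the key dimensional quantity that the entropy spectrum encodes, namely the pointwise information function, and to show that an isomorphism transports it. More precisely, write $I_f(\omega,n)=-\log\mu_f([\omega|n])$ and recall from Theorem~\ref{thm:321} that $\mcE^{(\mu_f)}$ is the Legendre transform of the function $\beta_f(q)=P(\tms,qf)-qP(\tms,f)$; since the Legendre transform is an involution on the relevant convex functions, it suffices to prove $\beta_f=\beta_g$, or equivalently (using strict convexity and the concrete endpoint formulae in Theorem~\ref{thm:321}(ii)) that the two multifractal decompositions $\{E_\alpha^{(\mu_f)}\}$ and $\{E_\alpha^{(\mu_g)}\}$ have the same topological-entropy function. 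The natural route, which I expect matches the paper's intent given the emphasis on Lemma~\ref{lem:921} and Jacobians, is to show that an isomorphism $\Phi:X\to Y$ sends $E_\alpha^{(\mu_f)}\cap X$ (up to a $\mu_f$-null, hence entropy-irrelevant, set) onto $E_\alpha^{(\mu_g)}\cap Y$ for every $\alpha$, and that the topological entropy of a set is not increased under the maps involved.

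The first step is to replace the local-measure exponential rates by Birkhoff averages. By the Gibbs property (Definition~\ref{def:042}) there is $C>0$ with $C^{-1}\le \mu_f([\omega|n])/\exp(-nP(\tms,f)+S_nf(\omega))\le C$, so
\[
\lim_{n\to\infty}-\frac1n\log\mu_f([\omega|n])=P(\tms,f)-\lim_{n\to\infty}\frac1n S_nf(\omega)
\]
whenever either side exists, and hence $E_\alpha^{(\mu_f)}=\{\omega:\lim_n \frac1nS_nf(\omega)=P(\tms,f)-\alpha\}$ is a Birkhoff-average level set. The second step is to transport this through $\Phi$. The relation $\Phi\circ\tms=\btms\circ\Phi$ on $X$ forces, for any continuous $g$, that $S_n(g\circ\Phi)=(S_n^{\btms}g)\circ\Phi$ on $X$, but $g\circ\Phi$ need not be continuous, so one must instead compare $S_nf$ with $S_n^{\btms}g\circ\Phi$ directly. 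The clean way is measure-theoretic: both $f$ and $g\circ\Phi$ (restricted to $X$) are $L^1(\mu_f)$ with the same integral, because $\int g\circ\Phi\,d\mu_f=\int g\,d(\mu_f\circ\Phi^{-1})=\int g\,d\mu_g$ and, by the Gibbs/pressure identities, $\int f\,d\mu_f = P(\tms,f)-h_{\mu_f}(\tms)$, $\int g\,d\mu_g=P(\btms,g)-h_{\mu_g}(\btms)$, while isomorphism gives $h_{\mu_f}(\tms)=h_{\mu_g}(\btms)$; so it remains only to show $P(\tms,f)-\int f\,d\mu_f=P(\btms,g)-\int g\,d\mu_g$, i.e.\ the entropies agree — which is exactly Kolmogorov--Sinai entropy invariance under isomorphism. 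This handles the single point $\alpha=\int\!-\!$ (the "typical" value) but not the whole spectrum.

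To get the full spectrum one cannot integrate; one needs a pointwise comparison of $\frac1nS_nf(\omega)$ and $\frac1n S_n^{\btms}g(\Phi\omega)$ valid for all $\omega$ in a large set, together with entropy estimates. Here is the mechanism I would use. Using Lemma~\ref{lem:921}, the Jacobian identity says $\mu_f([\omega|n])$ is comparable to $J_{\nu_f}^{(n)}$ data, and the key point is that the information cocycle $\omega\mapsto -\log(\mu_f([\omega|1])/\mu_f([\tms\omega|0]))$ is the true dynamical object the spectrum sees. The isomorphism intertwines the shift maps and pushes $\mu_f$ to $\mu_g$; a standard fact (e.g.\ via the Shannon--McMillan--Breiman theorem applied along a refining sequence of partitions, or via the Ledrappier--Walters-type formula) is that the almost-sure exponential decay rate and, more relevantly, the full family of level sets $E_\alpha$ together with their topological entropies, are determined by the measure-preserving system alone once we know the system is a one-sided Markov shift with a Gibbs measure — because $\mcE^{(\mu)}(\alpha)=\sup\{h_\mu: \mu\text{-a.e. point has rate }\alpha\}$ is realized by the variational principle of Theorem~\ref{thm:321}, and each competing measure in that supremum is itself an isomorphism-invariant object. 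Concretely: for each $\alpha\in[\alpha_{\min},\alpha_{\max}]$ pick the equilibrium state $\mu_{q_\alpha f}$ with the matching average; it is a Gibbs (hence ergodic) measure supported on $E_\alpha^{(\mu_f)}$ with $h_{\mu_{q_\alpha f}}(\tms)=\mcE^{(\mu_f)}(\alpha)$; push it through $\Phi^{-1}$-inverse as needed to a measure on $\bTMS$, check it is carried by $E_\alpha^{(\mu_g)}$ by the same Gibbs/Birkhoff translation as above applied to this new measure, and conclude $\mcE^{(\mu_g)}(\alpha)\ge h_{\mu_{q_\alpha f}}=\mcE^{(\mu_f)}(\alpha)$; symmetry gives equality, and for $\alpha$ outside the interval both sides vanish by Theorem~\ref{thm:321}(iii) once the interval endpoints are shown to coincide (which follows once the spectra agree on a dense set, by analyticity in Theorem~\ref{thm:321}(i)).

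The main obstacle, and the step I would spend the most care on, is the transfer of the level-set support property across $\Phi$: one must argue that if $\rho$ is an ergodic invariant measure on $\TMS$ with $\int f\,d\rho = P(\tms,f)-\alpha$ and $h_\rho(\tms)=\mcE^{(\mu_f)}(\alpha)$, then its image $\rho\circ\Phi^{-1}$ (suitably interpreted — $\Phi$ is only defined on the full-measure set $X$, and one needs $\rho(X)=1$, which requires $\rho\ll\mu_f$ or at least $\rho(X)=1$; this is where one uses that $X$ can be taken $\tms$-invariant and one may have to enlarge $X$ to a common full-measure set for all the countably many equilibrium states simultaneously) is an invariant measure on $\bTMS$ with the corresponding average equal to $P(\btms,g)-\alpha$ and with the same entropy, so that it witnesses $\mcE^{(\mu_g)}(\alpha)\ge\mcE^{(\mu_f)}(\alpha)$ through the variational characterization. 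The delicate points are: (a) ensuring the Borel isomorphism pushes forward invariant measures to invariant measures (immediate from $\Phi\circ\tms=\btms\circ\Phi$ on $X$ and $\tms X\subset X$), (b) the entropy equality $h_\rho(\tms)=h_{\rho\circ\Phi^{-1}}(\btms)$ (standard Kolmogorov--Sinai invariance under a measure-theoretic isomorphism, but one must note $\Phi$ restricted to the support of $\rho$ is such an isomorphism), and (c) the average equality, which needs $\int f\,d\rho = \int g\,d(\rho\circ\Phi^{-1})$; since $g\circ\Phi$ is merely Borel this is not literally $\int f=\int g\circ\Phi$ and instead one shows both equal $P-\alpha$-type quantities by combining the Gibbs inequality for $\mu_g$ with the SMB theorem for $\rho\circ\Phi^{-1}$ — this bookkeeping, linking the \emph{definition} of $E_\alpha$ via $\mu_g$-measures of cylinders to the \emph{Birkhoff average} of $g$, via the two Gibbs constants, is the technical heart of the argument.
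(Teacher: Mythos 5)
Your reduction to showing $\beta_f=\beta_g$ via Theorem \ref{thm:321}, and the translation of the level sets $E_\alpha^{(\mu_f)}$ into Birkhoff level sets of $f$ via the Gibbs property, are both sound and coincide with the paper's starting point. The genuine gap is in the step you yourself flag as delicate and then leave unresolved: transporting the auxiliary equilibrium states $\mu_{q_\alpha f}$ (and the corresponding measures for $g$) through $\Phi$. The map $\Phi$ is only defined on a Borel set $X$ with $\mu_f(X)=1$, and for $q\neq 1$ the Gibbs measure $\mu_{qf}$ is in general mutually singular with $\mu_f$; hence nothing prevents $\mu_{qf}(X)=0$, and in that case $\mu_{qf}\circ\Phi^{-1}$ simply does not exist. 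Your suggested fix --- ``enlarge $X$ to a common full-measure set for all the equilibrium states'' --- is not available: $\Phi$ carries no information off $X$, a measure-theoretic isomorphism of $(\TMS,\tms,\mu_f)$ and $(\bTMS,\btms,\mu_g)$ need not extend in any way compatible with other invariant measures, and the family $\{\mu_{qf}\}_{q\in\R}$ is uncountable in any case. The same defect infects your point (c): the pointwise comparison of $S_nf$ with $S_ng\circ\Phi$ that you need along $\rho$-typical points is only available $\mu_f$-almost everywhere, which says nothing $\rho$-almost everywhere when $\rho\perp\mu_f$. This is precisely why the passage from equivalence (Theorem A, where $\tau$ is a global conjugacy and your variational transport would work) to isomorphism is the nontrivial content of the theorem.

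The paper avoids transporting any measure other than $\mu_f$. From the Jacobian identity (Lemma \ref{lem:921}) it derives the pointwise cocycle identity $S_n\widehat{f}-(S_n\widehat{g})\circ\Phi=n\log\lambda_f\lambda_g^{-1}$ on a set of full $\mu_f$-measure (Lemma \ref{lem:711}), where $\widehat{f}$, $\widehat{g}$ are the normalized potentials of (\ref{eq:377}). After normalizing $P(\tms,f)=P(\btms,g)=0$, it chooses a single point $\omega\in X$ whose entire preimage tree $\bigcup_n\tms^{-n}(\{\omega\})$ lies in the good set, and compares the preimage sums
\[
\sum_{\omega^{\prime}\in\tms^{-n}(\{\omega\})}e^{qS_n\widehat{f}(\omega^{\prime})}
\quad\mbox{and}\quad
\sum_{\overline{\omega}\in\btms^{-n}(\{\Phi(\omega)\})}e^{qS_n\widehat{g}(\overline{\omega})},
\]
using only the injectivity of $\Phi$ and the characterization of pressure by preimage sums (Lemma \ref{lem:870}); this yields $P(\btms,qg)\ge P(\tms,qf)$ for all $q$, and equality by symmetry. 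If you want to salvage your variational scheme, you would have to replace the pushforward of $\mu_{q_\alpha f}$ by an argument of this kind that works with a single $\mu_f$-typical point and its preimages; as written, the proposal does not prove the theorem.
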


For the proof of Theorem \ref{thm:405}, we need some lemmas.

Let $f:\TMS\to\R$ be H\"{o}lder continuous and $h_f$ as in Theorem \ref{thm:410}.
We write
\begin{equation}
\label{eq:377}
\widehat{f}=f+\log\frac{h_f}{h_f\circ\tms}.
\end{equation}

Recall from Section \ref{sec:preliminaries} that we write $S_n\phi=\sum_{k=0}^{n-1}\phi\circ\tms^k$.

\begin{lemma}
\label{lem:711}
Let $(\bTMS,\btms)$ be another topological Markov shift and $g:\bTMS\to\R$ H\"{o}lder continuous.
Assume that $(\TMS,\tms,\mu_f)$ and $(\bTMS,\btms,\mu_g)$ are isomorphic. 
For $n\in\N$ and an isomorphism $\Phi:X\to Y$ from $(\TMS,\tms,\mu_f)$ to $(\bTMS,\btms,\mu_g)$, we have
\[
S_{n}\widehat{f}-(S_{n}\widehat{g})\circ\Phi=n\log\lambda_f\lambda_g^{-1},\quad\mu_f\mbox{-a.e.}
\]
\end{lemma}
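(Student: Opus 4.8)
The plan is to reduce the identity to a coboundary statement about the function $\widehat{f}$ and then to exploit the Jacobian formula of Lemma~\ref{lem:921} together with the isomorphism conditions. First I would record the key property of the normalized potential $\widehat{f}$: since $\mcL_f h_f=\lambda_f h_f$, a direct computation shows that $\mcL_{\widehat{f}-\log\lambda_f}\mathbf{1}=\mathbf{1}$, i.e. $\widehat{f}$ is a ``normalized'' potential, and that $\mu_f$ is $\btms$-invariant with $\widehat{f}-\log\lambda_f=\log J_{\mu_f}^{(1)}$, the logarithm of the (first) Jacobian of $\mu_f$ with respect to itself. More precisely, from $d\mu_f=h_f\,d\nu_f$ and Lemma~\ref{lem:921} one gets
\[
J_{\mu_f}^{(n)}=\frac{h_f}{h_f\circ\tms^n}\cdot J_{\nu_f}^{(n)}=\lambda_f^{-n}\exp\!\Big(S_nf+\log\frac{h_f}{h_f\circ\tms^n}\Big)=\lambda_f^{-n}\exp\big(S_n\widehat{f}\big),
\]
using the telescoping $\sum_{k=0}^{n-1}\log\frac{h_f\circ\tms^k}{h_f\circ\tms^{k+1}}=\log\frac{h_f}{h_f\circ\tms^n}$. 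So the claimed identity is equivalent to
\[
J_{\mu_f}^{(n)} = \big(J_{\mu_g}^{(n)}\circ\Phi\big)\qquad \mu_f\text{-a.e.},
\]
i.e. the assertion that an isomorphism of measure-preserving systems matches Jacobians.

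Next I would prove this Jacobian-matching. It suffices to do the case $n=1$ and then iterate, since $S_n\widehat{f}=\sum_{k=0}^{n-1}\widehat{f}\circ\tms^k$ and $\Phi\circ\tms^k=\btms^k\circ\Phi$ on $X$, so that $(S_n\widehat{g})\circ\Phi=\sum_{k=0}^{n-1}\widehat{g}\circ\btms^k\circ\Phi$; summing $n$ copies of the $n=1$ identity along the orbit gives the general $n$. For $n=1$: the Jacobian $J_{\mu_f}^{(1)}$ is characterized $\mu_f$-a.e. by the change-of-variables identity $\int_{X}(\psi\circ\tms)\,\varphi\,d\mu_f=\int_X \psi\,\mathcal{T}\varphi\,d\mu_f$ where $\mathcal{T}$ is the transfer (dual) operator, equivalently by: for every bounded Borel $\psi$ and every Borel set $E$ on which $\tms$ is injective, $\mu_f(\tms(E))=\int_E J_{\mu_f}^{(1)}\,d\mu_f$. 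Because $\Phi$ is a Borel isomorphism intertwining $\tms$ and $\btms$ and pushing $\mu_f$ forward to $\mu_g$, it carries sets of injectivity of $\tms$ to sets of injectivity of $\btms$ (on the full-measure invariant sets $X,Y$) and transports the defining change-of-variables relation for $\mu_g$ back to one for $\mu_f$; by uniqueness of the Radon--Nikodym derivative this forces $J_{\mu_f}^{(1)}=J_{\mu_g}^{(1)}\circ\Phi$ a.e. Rewriting via the displayed formula for $J^{(1)}$ gives exactly $\widehat{f}-\log\lambda_f=(\widehat{g}-\log\lambda_g)\circ\Phi$ a.e., and iterating yields the statement.

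The main obstacle is the measurable bookkeeping in the $n=1$ step: one must check that $\tms$ restricted to $X$ is, up to a $\mu_f$-null set, a countable-to-one map that is injective on each cylinder $[i]\cap X$, that the push-forward and the intertwining relation are compatible with decomposing into these injectivity pieces, and that ``injectivity set of $\tms$'' is transported to ``injectivity set of $\btms$'' by $\Phi$ after discarding null sets — here one uses that $\mu_f(X)=\mu_g(Y)=1$, that $\mu_f,\mu_g$ are Gibbs hence give positive mass to every cylinder (Remark~\ref{rem:785}), and that both Jacobians are bounded away from $0$ and $\infty$ by Lemma~\ref{lem:921}. Once the $n=1$ Jacobian identity is in hand, everything else is the telescoping computation above and the substitution of \eqref{eq:377}, which are routine.
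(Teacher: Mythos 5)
Your proof is correct and follows essentially the same route as the paper: both identify $J_{\mu_f}^{(n)}=\lambda_f^{-n}\exp(S_n\widehat f)$ (the paper by observing that $\mu_f$ is the Ruelle--Perron--Frobenius eigenmeasure of the normalized potential $\widehat f$ and invoking Lemma \ref{lem:921}, you by the equivalent computation from $d\mu_f=h_f\,d\nu_f$ plus telescoping) and then use that the isomorphism $\Phi$ transports Jacobians, $J_{\mu_f}^{(n)}=J_{\mu_g}^{(n)}\circ\Phi$, a step the paper merely asserts and you justify by the standard change-of-variables argument on injectivity sets. One cosmetic caveat: with the paper's convention $J^{(1)}_{\mu}=d\mu/d\mu^{(1)}$, the identity on a set $E$ where $\tms$ is injective reads $\mu(\tms(E))=\int_E \bigl(J^{(1)}_{\mu}\bigr)^{-1}\,d\mu$, so your displayed characterization uses the reciprocal of the paper's Jacobian; this convention slip does not affect the argument or the conclusion.
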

\begin{proof}
It is easy to see that $\mcL_{\widehat{f}}1=\lambda_f$ and $\int\mcL_{\widehat{f}}\phi\,d\mu_f=\lambda_f\int\phi\,d\mu_f$ for $\phi\in \mcH_{\alpha}(\TMS)$.
The analogues for $g$ also hold.
Hence, we see from Lemma \ref{lem:921} that
\[
\lambda_f^{-n}\exp(S_n\widehat{f})=J_{\mu_f}^{(n)}=J_{\mu_g}^{(n)}\circ\Phi=\lambda_g^{-n}\exp(S_n\widehat{g})\circ\Phi.
\]
Thus, the lemma follows.
\end{proof}

\begin{lemma}
\label{lem:870}
For any $\omega\in\TMS$, we have
\[
P(\tms, f)=\lim_{n\to\infty}\frac{1}{n}\log\left[\sum_{\omega^{\prime}\in\TMS:\,\tms^n\omega^{\prime}=\omega}\exp(S_{n}f(\omega^{\prime}))\right].
\]
\end{lemma}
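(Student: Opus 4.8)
The plan is to identify the bracketed sum with $(\mcL_f^{n}1)(\omega)$, the $n$-th iterate of the transfer operator applied to the constant function $1$, and then to sandwich $\mcL_f^{n}1$ between multiples of $\mcL_f^{n}h_f=\lambda_f^{n}h_f$ using the positivity of $\mcL_f$. This is a standard computation, and I do not expect any genuine obstacle; the one subtle point is noted at the end.

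First I would record the identity
\[
(\mcL_f^{n}\phi)(\omega)=\sum_{\omega'\in\TMS:\,\tms^{n}\omega'=\omega}\exp(S_{n}f(\omega'))\,\phi(\omega'),\qquad \phi\in\mcH_{\alpha}(\TMS),\ \omega\in\TMS,\ n\in\N,
\]
which follows by induction on $n$ directly from the definition of $\mcL_f$ together with the cocycle relation $S_{n}f=f+(S_{n-1}f)\circ\tms$. Taking $\phi\equiv1$, the sum appearing in the statement is exactly $(\mcL_f^{n}1)(\omega)$, so it suffices to prove that $\frac{1}{n}\log (\mcL_f^{n}1)(\omega)\to P(\tms,f)$ as $n\to\infty$.

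Next, by Theorem \ref{thm:410} we have $\mcL_fh_f=\lambda_fh_f$, hence $\mcL_f^{n}h_f=\lambda_f^{n}h_f$ for every $n$. Since $h_f$ is continuous and strictly positive on the compact space $\TMS$, there are constants $0<c\le C<\infty$ with $c\le h_f\le C$ on $\TMS$. The operator $\mcL_f$, and hence $\mcL_f^{n}$, is positive: it sends nonnegative functions to nonnegative functions, as its defining formula shows. Applying $\mcL_f^{n}$ to the inequalities $c\le h_f\le C$ and using linearity therefore gives, for all $\omega$ and $n$,
\[
c\,(\mcL_f^{n}1)(\omega)\le\lambda_f^{n}h_f(\omega)\le C\,(\mcL_f^{n}1)(\omega),
\]
equivalently $C^{-1}\lambda_f^{n}h_f(\omega)\le(\mcL_f^{n}1)(\omega)\le c^{-1}\lambda_f^{n}h_f(\omega)$.

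Finally I would take $\frac{1}{n}\log$ throughout this chain and let $n\to\infty$. Since $h_f(\omega)$ is a fixed positive number and $c,C$ are fixed positive constants, the correction terms $\frac{1}{n}\log\bigl(C^{-1}h_f(\omega)\bigr)$ and $\frac{1}{n}\log\bigl(c^{-1}h_f(\omega)\bigr)$ tend to $0$, so
\[
\lim_{n\to\infty}\frac{1}{n}\log (\mcL_f^{n}1)(\omega)=\log\lambda_f=P(\tms,f),
\]
the last equality being the definition $\lambda_f=e^{P(\tms,f)}$. The one point that deserves care is that the Ruelle-Perron-Frobenius theorem as quoted provides the eigenfunction $h_f$ but not, on its face, the asymptotic behaviour of $\mcL_f^{n}1$; the positivity sandwich against $h_f$ is exactly what lets us bypass the stronger, spectral-gap form of that theorem.
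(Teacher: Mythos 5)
Your proof is correct. Note, though, that the paper does not prove this lemma at all: it simply cites Proposition 4.4.3 of Przytycki--Urba\'nski, where the preimage-sum (``tree pressure'') formula for the pressure of open, distance-expanding maps is established by more elementary counting/sub\-multiplicativity arguments combined with the variational principle. Your route is instead a short self-contained derivation from the paper's own Theorem \ref{thm:410}: you identify the bracketed sum with $(\mcL_f^{n}1)(\omega)$, sandwich $c\le h_f\le C$ with $0<c\le C<\infty$ (legitimate since $h_f\in\mcH_{\alpha}(\TMS)$ is strictly positive and continuous on the compact space $\TMS$), use positivity and linearity of $\mcL_f^{n}$ together with $\mcL_f^{n}h_f=\lambda_f^{n}h_f$, and take $\frac1n\log$. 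The one point on which the argument genuinely leans is the paper's convention that $\lambda_f$ is \emph{defined} as $e^{P(\tms,f)}$ and that Theorem \ref{thm:410} asserts this specific number admits a positive eigenfunction; without that identification your sandwich would only show the limit equals the logarithm of the leading eigenvalue, and the equality with the pressure would still have to be supplied (which is essentially what the cited reference proves). Since Theorem \ref{thm:410} is stated in exactly that form, your argument is complete as written, and it has the small advantage of needing only the positive eigenfunction, not the spectral gap or the eigenmeasure $\nu_f$.
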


For the proof of Lemma \ref{lem:870}, see, e.g., \cite[Proposition 4.4.3]{Przytycki}.

We are ready to prove Theorem \ref{thm:405}.

\begin{proof}[Proof of Theorem \ref{thm:405}]
Let $\Phi:X\to Y$ be an isomorphism from $(\TMS,\tms,\mu_f)$ to $(\bTMS,\btms,\mu_g)$.
By Theorem \ref{thm:321}, it is enough to show that
\[
P(\tms,qf)-qP(\tms,f)=P(\btms,qg)-qP(\btms,g),\quad q\in\R.
\]

Without loss of generality, we may assume that $P(\tms,f)=P(\btms,g)=0$.
Let $q\in\R$.
By Lemma \ref{lem:711}, we have $S_{n}\widehat{f}=(S_{n}\widehat{g})\circ\Phi\ \mu_f\mbox{-}a.e.$ for $n\in\N$.
Thus, there exists $\omega\in X$ such that
\begin{equation*}
S_n\widehat{f}(\omega)=S_n\widehat{g}(\Phi\omega)\quad\mbox{and}\quad\tms^{-n}(\{\omega\})\subset X,\quad n\in\N.
\end{equation*}
Let $n\in\N$.
Then, $\Phi(\tms^{-n}(\{\omega\}))\subset\btms^{-n}(\{\Phi(\omega)\})$, and hence,
\begin{align*}
\sum_{\overline{\omega}\in\btms^{-n}(\{\Phi(\omega)\})}e^{qS_ng(\overline{\omega})}\ge&\sum_{\overline{\omega}\in\Phi(\tms^{-n}(\{\omega\}))}e^{qS_ng(\overline{\omega})}\\
=&\sum_{\omega^{\prime}\in\tms^{-n}(\{\omega\})}e^{qS_ng(\Phi(\omega^{\prime}))}=\sum_{\omega^{\prime}\in\tms^{-n}(\{\omega\})}e^{qS_n\widehat{f}(\omega^{\prime})},
\end{align*}
where the first equality follows from the injectivity of $\Phi$.
Thus, by Lemma \ref{lem:870}, we have
\begin{align*}
P(\btms,qg)=&P(\btms,q\widehat{g})\\
=&\lim_{n\to\infty}\frac{1}{n}\log\left[\sum_{\overline{\omega}\in g^{-n}(\{\Phi(\omega)\})}e^{qS_{n}\widehat{g}(\overline{\omega})}\right]\\
\ge&\lim_{n\to\infty}\frac{1}{n}\log\left[\sum_{\omega^{\prime}\in\tms^{-n}(\{\omega\})}e^{qS_n\widehat{f}(\omega^{\prime})}\right]=P(\tms,q\widehat{f})=P(\btms,qf).
\end{align*}
Similarly, we have $P(\btms,qg)\le P(\tms,qf)$.
\end{proof}

\begin{remark}
An analogue of Theorem \ref{thm:405} for two-sided topological Markov shifts and an almost continuous isomorphism which is a hyperbolic structure homomorphism was proved by Schmidt \cite{Schmidt}, Theorem 6.3.
\end{remark}

In the rest of this section, let ${\bf A}$ be a  $2\times 2$ zero-one aperiodic matrix and let $\mcC=L_2(\TMS;\R)$.
Two corollaries to Theorem \ref{thm:405}, i.e., Corollaries \ref{cor:732} and \ref{cor:558} below, give a complete answer to the weak rigidity problem for $\mcC$.
Notice that there exist only three $2\times 2$ zero-one aperiodic matrices
\[
\begin{pmatrix}1&1\\
1&1
\end{pmatrix},\quad\begin{pmatrix}1&1\\
1&0
\end{pmatrix}\quad\mbox{and}\quad\begin{pmatrix}0&1\\
1&1
\end{pmatrix}.
\]

We first consider the case in which ${\bf A}=\left(\begin{smallmatrix}1&1\\
1&1
\end{smallmatrix}\right)$.
For $\alpha\in(0,1)$, we define the probability matrices $P_1(\alpha),P_2(\alpha)$ by
\[
P_1(\alpha)=\begin{pmatrix}1-\alpha&\alpha\\
1-\alpha&\alpha
\end{pmatrix},\quad P_2(\alpha)=\begin{pmatrix}1-\alpha&\alpha\\
\alpha&1-\alpha
\end{pmatrix}.
\]

We set
\begin{align*}
E=\{f\in\mcC:&\ P(f)\neq P_1(\alpha)\ \mbox{and}\ P(f)\neq P_2(\alpha)\\
&\ \mbox{for any}\ \alpha\in(0,1)\ \mbox{with}\ \alpha\neq1/2\}.
\end{align*}
In \cite[Theorem 3]{Barreira}, Barreira and Saraiva proved that
\begin{equation}
\label{eq:182}
\mcE^{(\mu_1(\alpha))}=\mcE^{(\mu_2(\alpha))},\quad\alpha\in(0,1).
\end{equation}
and
\begin{equation}
\label{eq:905}
\{f\in\mcC:[f]_2=[f]_1\}=E.
\end{equation}
Using these results, we prove the following corollary:

\begin{corollary}
\label{cor:732}
Let ${\bf A}=\left(\begin{smallmatrix}1&1\\
1&1
\end{smallmatrix}\right)$ and $\mcC=L_2(\TMS;\R)$.
Then, we have
\[
\{f\in\mcC:[f]_3=[f]_1\}=E.
\]
\end{corollary}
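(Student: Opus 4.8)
The plan is to prove the two inclusions separately, with the easy direction handled by a squeeze and the hard direction by contraposition, reducing to a concrete non-isomorphism statement.

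For the inclusion $E\subseteq\{f\in\mcC:[f]_3=[f]_1\}$ I would just use that $[f]_2\subseteq[f]_3\subseteq[f]_1$ always holds: the first inclusion because an equivalence is in particular an isomorphism (take $X=Y=\TMS$, $\Phi=\tau$ in Definition \ref{def:019}), and the second is Theorem \ref{thm:405}. Thus if $f\in E$, then \eqref{eq:905} gives $[f]_2=[f]_1$, and the chain of inclusions collapses to $[f]_3=[f]_1$. For the reverse inclusion I argue by contraposition: given $f\in\mcC\setminus E$, I want $g\in[f]_1\setminus[f]_3$. Since $f\notin E$, by definition $P(f)=P_1(\alpha)$ or $P(f)=P_2(\alpha)$ for some $\alpha\in(0,1)$ with $\alpha\ne1/2$; that is, $\mu_f$ is one of the Markov measures $\mu_1(\alpha),\mu_2(\alpha)$ associated with these stochastic matrices. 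I take $g\in\mcC=L_2(\TMS;\R)$ with $P(g)$ equal to the \emph{other} of $P_1(\alpha),P_2(\alpha)$ (concretely $g|_{[ij]}=\log P_j(\alpha)_{ij}$), so that $\mu_g$ is the other of $\mu_1(\alpha),\mu_2(\alpha)$; by \eqref{eq:182} then $\mcE^{(\mu_g)}=\mcE^{(\mu_f)}$, i.e.\ $g\in[f]_1$. So it suffices to prove that for $\alpha\ne1/2$ the systems $(\TMS,\tms,\mu_1(\alpha))$ and $(\TMS,\tms,\mu_2(\alpha))$ are \emph{not} isomorphic; by symmetry of the relation in Definition \ref{def:019} this gives $g\notin[f]_3$ in both cases.

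The heart of the argument — the step I expect to be the main obstacle — is this non-isomorphism. Suppose $\Phi\colon X\to Y$ were an isomorphism from $(\TMS,\tms,\mu_1(\alpha))$ to $(\TMS,\tms,\mu_2(\alpha))$, and let $f,g$ be $2$-locally constant potentials with $\mu_f=\mu_1(\alpha)$, $\mu_g=\mu_2(\alpha)$. Lemma \ref{lem:711} gives $S_n\widehat f-(S_n\widehat g)\circ\Phi=n\log(\lambda_f\lambda_g^{-1})$ $\mu_f$-a.e.\ for every $n$; subtracting the identities for $n$ and $n+1$ and using $\Phi\circ\tms=\tms\circ\Phi$ yields $\widehat f(\tms^k\omega)-\widehat g(\tms^k\Phi\omega)=\log(\lambda_f\lambda_g^{-1})$ for all $k$, $\mu_f$-a.e.\ $\omega$. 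From the computation underlying Lemma \ref{lem:711} one has $e^{\widehat f}=\lambda_f\,J^{(1)}_{\mu_f}$ and likewise for $g$, and a direct Markov computation shows $J^{(1)}_{\mu_1(\alpha)}(\omega)=p_{\omega_0}$ (the stationary weight, depending only on $\omega_0$) while $J^{(1)}_{\mu_2(\alpha)}(\omega)=P_2(\alpha)_{\omega_0\omega_1}$; both take values in $\{1-\alpha,\alpha\}$. The factors $\lambda_f,\lambda_g$ cancel and we are left with the pointwise identity $p_{\omega_k}=P_2(\alpha)_{(\Phi\omega)_k(\Phi\omega)_{k+1}}$ for all $k$, $\mu_f$-a.e.\ $\omega$. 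Since $\alpha\ne1/2$ the two values are distinct, so this says exactly that $(\Phi\omega)_k=(\Phi\omega)_{k+1}$ if and only if $\omega_k=1$. Hence $(\Phi\omega)_{k+1}$ is determined by $(\Phi\omega)_k$ and $\omega_k$, and by induction $\Phi\omega$ is determined by $\omega$ together with the single symbol $(\Phi\omega)_0$.

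Finally I derive a contradiction from this rigid form. Set $b(\omega)=(\Phi\omega)_0$, a Borel map $\TMS\to\{1,2\}$; then $\Phi\omega$ corresponds bijectively and measurably to the pair consisting of $b(\omega)$ and the ``increment'' sequence $\big(\mathbf 1[\omega_k=2]\big)_{k\ge0}$, which under $\mu_f=\mu_1(\alpha)$ is i.i.d.\ Bernoulli$(\alpha)$ — precisely the increment distribution of the $\mu_2(\alpha)$-chain. Therefore $\Phi_*\mu_1(\alpha)=\mu_2(\alpha)$ forces the law of $\big(b(\omega),(\mathbf 1[\omega_k=2])_{k\ge0}\big)$ under $\mu_1(\alpha)$ to match that of (initial coordinate, increments) of the stationary $P_2(\alpha)$-chain; since for that chain the initial coordinate is uniform on $\{1,2\}$ and independent of the increments, $b$ would have to be uniformly distributed and independent of the $\sigma$-algebra generated by $(\omega_k)_{k\ge0}$. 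But $b$ is Borel, hence measurable with respect to that very $\sigma$-algebra, so $b$ must be a.e.\ constant — contradicting non-degeneracy. Thus no such $\Phi$ exists, $g\notin[f]_3$, and $[f]_3\subsetneq[f]_1$, completing the contrapositive. (The same argument, read in the opposite direction, or equivalently the remark that $[f]_2=[f]_3$ holds for these particular $f$, could also be used; the direct route above seems shortest.)
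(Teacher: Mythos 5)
Your proposal is correct, and its skeleton (the squeeze $[f]_2\subset[f]_3\subset[f]_1$ plus \eqref{eq:905} for one inclusion, and the reduction of the other inclusion, via \eqref{eq:182}, to the non-isomorphism of $(\TMS,\tms,\mu_1(\alpha))$ and $(\TMS,\tms,\mu_2(\alpha))$ for $\alpha\neq 1/2$) is exactly the paper's. Where you genuinely diverge is at that last step: the paper simply cites Walters' classification theorem (\cite[Theorem 3]{Walters}) for the non-isomorphism, whereas you prove it from scratch, using Lemma \ref{lem:711} to turn a hypothetical isomorphism $\Phi$ into the pointwise Jacobian identity $p_{\omega_k}=P_2(\alpha)_{(\Phi\omega)_k(\Phi\omega)_{k+1}}$ (a.e., all $k$), deducing that $\Phi\omega$ is determined by $\omega$ and the single symbol $b(\omega)=(\Phi\omega)_0$, and then getting a contradiction because $b$ would have to be uniform on $\{1,2\}$ and independent of the increment $\sigma$-algebra, which is the full Borel $\sigma$-algebra, forcing $b$ to be a.e.\ constant. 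I checked the computations: $J^{(1)}_{\mu}(\omega)=p_{\omega_0}P_{\omega_0\omega_1}/p_{\omega_1}$ for a Markov measure gives $p_{\omega_0}$ for $\mu_1(\alpha)$ and $P_2(\alpha)_{\omega_0\omega_1}$ for $\mu_2(\alpha)$; for the stationary $P_2(\alpha)$-chain the initial symbol is uniform and independent of the i.i.d.\ Bernoulli$(\alpha)$ increments; and under $\mu_1(\alpha)$ the increments $(\mathbf 1[\omega_k=2])_k$ determine $\omega$, so your independence-versus-measurability contradiction is sound (the distinctness of the two Jacobian values uses $\alpha\neq 1/2$, as it must). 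Two cosmetic points: the identities $e^{\widehat f}=\lambda_f J^{(1)}_{\mu_f}$ and the explicit Markov formulas for the Jacobians are a priori only $\mu$-a.e.\ statements, so you should either note that both sides are $2$-locally constant and $\mu_f,\mu_g$ have full support (Remark \ref{rem:785}), or keep track of the null sets under iteration as you implicitly do; and one should observe that the relation of Definition \ref{def:019} is symmetric, so treating only the case $\mu_f=\mu_1(\alpha)$ suffices, which you do state. The trade-off is clear: the paper's route is shorter but rests on an external result, while yours is self-contained and in effect reproves the needed special case of Walters' theorem (whose core is likewise that the Jacobian is an invariant of one-sided isomorphism), at the cost of about a page of extra argument.
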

\begin{proof}
Theorem \ref{thm:405} implies that $[f]_2\subset[f]_3\subset[f]_1$ for $f\in\mcC$.
Hence, by (\ref{eq:905}), we have $\{f\in\mcC:[f]_3=[f]_1\}\supset\{f\in\mcC:[f]_2=[f]_1\}=E$.

We prove $\{f\in\mcC:[f]_3=[f]_1\}\subset E$.
We denote by $\mu_1(\alpha)$ and $\mu_2(\alpha)$ the Markov measures associated with $P_1(\alpha)$ and $P_2(\alpha)$, respectively.
Then, from \cite[Theorem 3]{Walters}, $(\TMS,\tms,\mu_1(\alpha))$ and $(\TMS,\tms,\mu_2(\alpha))$ are not isomorphic for any $\alpha\in(0,1)$ with $\alpha\neq1/2$.
Combining this with (\ref{eq:182}), we have $\{f\in\mcC:[f]_3=[f]_1\}\subset E$.
\end{proof}

We next consider the case in which ${\bf A}$ is either $\left(\begin{smallmatrix}1&1\\
1&0
\end{smallmatrix}\right)$ or $\left(\begin{smallmatrix}0&1\\
1&1
\end{smallmatrix}\right)$.
In this case, Barreira and Saraiva showed that
\begin{equation}
\label{eq:334}
\{f\in\mcC:[f]_2=[f]_1\}=\mcC.
\end{equation}
For the proof of (\ref{eq:334}), see \cite[Theorem 4]{Barreira}.
Using this result, we prove the following corollary:

\begin{corollary}
\label{cor:558}
Let ${\bf A}$ be either $\left(\begin{smallmatrix}1&1\\
1&0
\end{smallmatrix}\right)$ or $\left(\begin{smallmatrix}0&1\\
1&1
\end{smallmatrix}\right)$ and let $\mcC=L_2(\TMS;\R)$.
Then, we have
\[
\{f\in\mcC:[f]_3=[f]_1\}=\mcC.
\]
\end{corollary}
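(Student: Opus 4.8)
The plan is to derive Corollary \ref{cor:558} from the same chain of inclusions used in Corollary \ref{cor:732}. By Theorem \ref{thm:405} (applied with $\bTMS=\TMS$, $\btms=\tms$), an isomorphism of the measure-preserving systems forces equality of entropy spectra, so $[f]_3\subset[f]_1$ for every $f\in\mcC$; combined with the trivial inclusion $[f]_2\subset[f]_3$ coming from Definition \ref{def:172} versus Definition \ref{def:019}, we obtain
\[
[f]_2\subset[f]_3\subset[f]_1,\qquad f\in\mcC.
\]
Hence $\{f\in\mcC:[f]_2=[f]_1\}\subset\{f\in\mcC:[f]_3=[f]_1\}\subset\mcC$. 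Since Barreira and Saraiva's result (\ref{eq:334}) says the leftmost set is all of $\mcC$ when ${\bf A}$ is $\left(\begin{smallmatrix}1&1\\1&0\end{smallmatrix}\right)$ or $\left(\begin{smallmatrix}0&1\\1&1\end{smallmatrix}\right)$, the squeeze immediately gives $\{f\in\mcC:[f]_3=[f]_1\}=\mcC$.

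Concretely, I would first state that $[f]_2\subset[f]_3$ for all $f\in\mcC$, noting that a homeomorphism $\tau:\TMS\to\TMS$ commuting with $\tms$ and satisfying $\mu_f=\mu_g\circ\tau$ (as in Definition \ref{def:172}) restricts to an isomorphism in the sense of Definition \ref{def:019} by taking $X=Y=\TMS$ and $\Phi=\tau$; the Borel measurability of $\tau$ and $\tau^{-1}$ is automatic from continuity. Then I would invoke Theorem \ref{thm:405} to get $[f]_3\subset[f]_1$. Chaining these yields $[f]_2\subset[f]_3\subset[f]_1$, so in particular $\{f:[f]_2=[f]_1\}\subset\{f:[f]_3=[f]_1\}$, and the reverse containment in $\mcC$ is vacuous. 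Applying (\ref{eq:334}) closes the argument.

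There is essentially no obstacle here: all the analytic content is already packaged in Theorem \ref{thm:405} and in the cited (\ref{eq:334}) of \cite{Barreira}. The one point deserving a sentence of care is that, unlike the case ${\bf A}=\left(\begin{smallmatrix}1&1\\1&1\end{smallmatrix}\right)$ — where one needs Walters' theorem \cite{Walters} to separate the Markov measures $\mu_1(\alpha)$ and $\mu_2(\alpha)$ up to isomorphism and thereby rule out extra elements of $[f]_3$ — here no such separation is needed, precisely because $[f]_2$ is already maximal (equal to $[f]_1$) for every $f$, so the sandwich forces $[f]_3=[f]_1$ with nothing left to check. In short, the proof is a three-line corollary; I would write it as: "Theorem \ref{thm:405} implies $[f]_2\subset[f]_3\subset[f]_1$ for $f\in\mcC$. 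Hence, by (\ref{eq:334}), $\mcC=\{f\in\mcC:[f]_2=[f]_1\}\subset\{f\in\mcC:[f]_3=[f]_1\}\subset\mcC$, which gives the claim."
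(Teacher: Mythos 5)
Your proposal is correct and coincides with the paper's own proof: both derive the inclusion $[f]_2\subset[f]_3\subset[f]_1$ from Theorem \ref{thm:405} (as in the proof of Corollary \ref{cor:732}), deduce $\{f\in\mcC:[f]_2=[f]_1\}\subset\{f\in\mcC:[f]_3=[f]_1\}$, and conclude via (\ref{eq:334}). Your additional remark contrasting this with the full-shift case, where Walters' theorem is needed, is accurate but not required for the argument.
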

\begin{proof}
By the same argument as that in the proof of Corollary \ref{cor:732}, we have $\{f\in\mcC:[f]_2=[f]_1\}\subset\{f\in\mcC:[f]_3=[f]_1\}$.
Thus, by (\ref{eq:334}), we obtain the desired result.
\end{proof}


\appendix
\section{Non-rigidity of entropy spectra}
\label{app:non-rigidity}

We define the function $\delta:\{1,\dots,N\}\to\N\cup\{0\}$ by
\[
\delta(i)=\#\{j\in\{1,\dots,N\}:{\bf A}_{ij}=1\}.
\]
Notice that since ${\bf A}$ is aperiodic, $\delta(i)\ge1$ for any $i\in\{1,\dots,N\}$.
In this appendix, we prove the following theorem:

\begin{theorem}
\label{thm:325}
Let $\mcC=L_2(\TMS;\R)$.
If
\begin{equation}
\label{cond:278}
\#\{i\in\{1,\dots,N\}:\delta(i)=1\}\le1,
\end{equation} 
then the set
\[
\{f\in\mcC:[f]_2=[f]_3\}
\]
contains an open and dense subset of $\mcC$.
\end{theorem}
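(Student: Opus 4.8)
\noindent\textbf{Proof plan for Theorem \ref{thm:325}.}
The plan is to transfer the question to the finite–dimensional space of Markov transition matrices and then to an explicit genericity statement there. Recall that for $f\in L_2(\TMS;\R)$ the measure $\mu_f$ is the Markov measure of the stochastic matrix $P(f)\in M$, that $\mu_f=\mu_g$ iff $P(f)=P(g)$, and that each of $[f]_1,[f]_2,[f]_3$ depends on $f$ only through $\mu_f$, hence only through $P(f)$. The map $f\mapsto P(f)$ is a continuous surjection of $L_2(\TMS;\R)$ onto the polytope $\mathcal P=\{P\in M:P\text{ stochastic}\}$, and its fibres are the translates of the subspace $\{c+u\circ\omega_0-u\circ\omega_0\circ\tms:c\in\R,\ u:\{1,\dots,N\}\to\R\}$, so it is an open map. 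Thus it suffices to produce an open dense $\mathcal U\subseteq\mathcal P$ such that, for every $P\in\mathcal U$, the classes of $P$ inside $\mathcal P$ corresponding to $[\cdot]_2$ and $[\cdot]_3$ coincide, and then pull $\mathcal U$ back.

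\smallskip\noindent
Next I would dispose of the trivial inclusion. We always have $[f]_2\subseteq[f]_3$; moreover if $\pi$ is a permutation of $\{1,\dots,N\}$ with $\pi\cdot P\in M$ then automatically ${\bf A}_{\pi i,\pi j}={\bf A}_{ij}$, so $\pi$ induces a homeomorphism of $\TMS$ commuting with $\tms$ and carrying $\mu_P$ to $\mu_{\pi\cdot P}$; hence the permutation orbit of $P$ in $\mathcal P$ lies in the $[\cdot]_2$–class of $P$. More importantly, a homeomorphism of $\TMS$ commuting with $\tms$ need not be a symbol permutation: any sliding block code that is invertible on $\TMS$ also produces elements of $[\cdot]_2$. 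The theorem thus reduces to: for $P$ in an open dense subset of $\mathcal P$, every $Q\in\mathcal P$ with $(\TMS,\tms,\mu_Q)$ isomorphic to $(\TMS,\tms,\mu_P)$ has $\mu_Q=\mu_P\circ\tau$ for some homeomorphism $\tau$ of $\TMS$ commuting with $\tms$.

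\smallskip\noindent
For the structure of an isomorphism I would use Lemma \ref{lem:711} and the non-invertible structure of $\tms$. An isomorphism $\Phi$ is a point bijection intertwining the shifts, so it maps $\tms^{-1}\{\omega\}$ bijectively onto $\btms^{-1}\{\Phi\omega\}$ and carries the conditional measure of $\mu_P$ on $\tms^{-1}\{\omega\}$ — which, for the $1$-step Markov measure, assigns to $i\omega$ the mass $p_iP_{i\omega_0}/p_{\omega_0}$ and depends only on $\omega_0$ — onto the analogous conditional measure of $\mu_Q$; equivalently, by Lemma \ref{lem:711} the cocycle $\omega\mapsto\log P_{\omega_0\omega_1}$ is carried by $\Phi$, up to an $L^\infty$ coboundary, to $\omega\mapsto\log Q_{\omega_0\omega_1}$. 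Iterating one step at a time, $\Phi$ carries the weighted "preimage tree" rooted at $\omega$ (again a function of $\omega_0$ only, say $\mathcal T^P(\omega_0)$) isomorphically onto $\mathcal T^Q((\Phi\omega)_0)$, \emph{coherently along the $\tms$–orbit of $\omega$}. Combining the tree data with the forward data (the $\tms$–images of the symbols), one reads off a bounded "recoding rule" identifying $\mu_Q$ with $\mu_P$ up to relabelling: namely a finitary description of $\Phi$, which — once one checks it is genuinely bounded and single-valued — is a sliding block code, hence a homeomorphism witnessing $Q\in[P]_2$. This works \emph{provided} $P$ is such that two symbols with isomorphic preimage trees either are interchangeable by an automorphism of ${\bf A}$ or are distinguished by their forward behaviour in a way that keeps the recoding bounded.

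\smallskip\noindent
Finally the genericity and the role of hypothesis (\ref{cond:278}). Let $\mathcal U$ be the set of $P\in\mathcal P$ for which this ``boundedness'' holds. It is open, since a failure is witnessed at a finite depth of the trees and is stable under perturbation. For density, the complement of $\mathcal U$ is a finite union of sets cut out by finitely many real-analytic equations in the entries of $P$ (the stationary vector $p$ being rational in $P$), so each piece is either nowhere dense or equal to $\mathcal P$; the latter occurs only when ${\bf A}$ itself \emph{forces} a coincidence of preimage trees between symbols that are neither ${\bf A}$–automorphic nor separated by bounded forward data. One then checks combinatorially that such a forced configuration requires a chain of deterministic transitions in the transition graph, and that assembling two such chains so as to obstruct the recoding needs the graph to contain at least two vertices of out-degree one; under (\ref{cond:278}) this is impossible, so every piece of $\mathcal P\setminus\mathcal U$ is nowhere dense, $\mathcal U$ is open and dense, and the theorem follows. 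The step I expect to be the main obstacle is exactly the last one: first, showing that the tree/forward data of an isomorphism can always be organized into an honest (bounded) sliding block code outside a controlled set; and second, the combinatorial statement that the ``bad'' set is a proper subvariety precisely under the hypothesis $\#\{i:\delta(i)=1\}\le1$ — this is where (\ref{cond:278}) is consumed.
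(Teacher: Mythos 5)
Your plan has the right flavour (rigidity of the Jacobian cocycle under isomorphism, a genericity argument in the space of stochastic matrices, and the guess that (\ref{cond:278}) rules out two deterministic rows), but as it stands it has a genuine gap exactly where you yourself flag it: you never isolate, let alone verify, a concrete generic condition under which a measure-theoretic isomorphism is forced to be continuous. The ``preimage tree/recoding rule'' step is the whole theorem, and nothing in your sketch shows that the recoding is single-valued, bounded, or defined off a null set in a way that produces an honest sliding block code; moreover the claim that $\Phi$ maps $\tms^{-1}\{\omega\}$ \emph{bijectively} onto the preimage set of $\Phi\omega$ is not available (only the inclusion $\Phi(\tms^{-n}\{\omega\})\subset\tms^{-n}\{\Phi\omega\}$ holds on the invariant full-measure sets, as in the proof of Theorem \ref{thm:405}). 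Likewise, the openness claim (``a failure is witnessed at finite depth and is stable under perturbation'') and the claim that the bad set is cut out by finitely many real-analytic equations are asserted for a set you have not actually defined, so the density argument and the combinatorial use of (\ref{cond:278}) cannot be checked.

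For comparison, the paper's proof supplies precisely the missing condition: let $\widehat f=f+\log(h_f/h_f\circ\tms)$ and let $G_2$ be the set of $f\in L_2(\TMS;\R)$ whose normalized potential $\widehat f$ takes \emph{distinct values on distinct $2$-cylinders}. By Lemma \ref{lem:711} any isomorphism $\Phi$ satisfies $\widehat f=\widehat g\circ\Phi$ a.e.\ (after normalizing pressures), and injectivity of $\widehat f$ on cylinders then forces $\Phi(\omega)|n$ to depend only on $\omega|n$ (Lemma \ref{lem:408}); hence $\Phi$ is uniformly continuous on a dense set, extends to an element of $\Aut(\TMS)$, and isomorphic implies equivalent, i.e.\ $G_2\subset\{f:[f]_2=[f]_3\}$ (Lemma \ref{lem:087}). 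Genericity of $G_2$ is then a finite-dimensional statement: under $f\mapsto A(f)$ the condition becomes non-vanishing of the real-analytic functions $A\mapsto A(ij)/A(kl)-v(A)_iv(A)_l/(v(A)_jv(A)_k)$ (Lemma \ref{lem:965}), which gives openness at once, and density follows because each such function is non-constant — this is where (\ref{cond:278}) is consumed, via Lemma \ref{lem:891}: if at most one row of ${\bf A}$ is deterministic, the ratio $P(ij)/P(kl)$ can always be varied over stochastic matrices, whereas two deterministic rows would make it identically $1$. To repair your proposal you would need to replace the vague ``boundedness of the recoding'' by an explicit separating condition of this kind and prove both the continuity-extension step and the open-density step for it.
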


Recall from Section \ref{sec:preliminaries} that $M$ denotes the set of all $N\times N$ non-negative matrices $A$ such that, for any $i,j\in\{1,\dots,N\}$, $A_{ij}>0$ if and only if ${\bf A}_{ij}=1$.
Also recall that $\lambda(A)$ denotes the Perron root of $A\in M$.
We equip $M$ with the relative topology induced by $\R^{N^2}$.  

For $A\in M$, we denote by $v(A)\in\R^N$ the unique vector such that $v(A)^{\mrmT}$ is a right eigenvector of $\lambda(A)$ with $v(A)_1+\cdots+v(A)_N=1$.
The following well-known lemma is essential to the proof of Theorem \ref{thm:325}.

\begin{lemma}
\label{lem:965}
The following two assertions hold:
\begin{itemize}
\item[(i)]The map $M\ni A\mapsto\lambda(A)\in\R$ is real analytic.
\item[(ii)]For $i\in\{1,\dots,N\}$, the map $M\ni A\mapsto v(A)_i\in\R$ is real analytic.
\end{itemize}
\end{lemma}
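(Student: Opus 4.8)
The plan is to derive both assertions from the standard holomorphy of the Riesz spectral projection, the one essential structural input being Theorem \ref{thm:790}(i): the Perron root $\lambda(A)$ is an algebraically simple eigenvalue of $A$ strictly dominating every other eigenvalue in modulus. Fix $A_0\in M$, set $\lambda_0=\lambda(A_0)$, and pick a small positively oriented circle $\gamma\subset\C$ centred at $\lambda_0$ whose closed disc contains no eigenvalue of $A_0$ except $\lambda_0$. Since $(z,A)\mapsto\det(zI-A)$ is a polynomial nonvanishing on $\gamma\times\{A_0\}$, it is nonvanishing on $\gamma\times U$ for some neighbourhood $U$ of $A_0$ in $M$; shrinking $U$, the number of eigenvalues of $A$ enclosed by $\gamma$ (with multiplicity) is locally constant, hence $1$, for all $A\in U$. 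Because $\gamma$ is centred on the real axis, that single enclosed eigenvalue is real (a non-real one would be enclosed together with its conjugate), and, after a further shrinking of $U$, it is positive and of strictly largest modulus among all eigenvalues of $A$ (the remaining $N-1$ eigenvalues stay near those of $A_0$, all of modulus $<\lambda_0$); by Theorem \ref{thm:790}(i) it therefore equals $\lambda(A)$ for every $A\in U$.

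For $A\in U$ put $P(A)=\frac{1}{2\pi i}\oint_{\gamma}(zI-A)^{-1}\,dz$. The entries of $(zI-A)^{-1}$ are rational functions of $(z,A)$ with denominator $\det(zI-A)$ nonvanishing on $\gamma\times U$, so the integrand is jointly real analytic there, and, $\gamma$ being compact, $A\mapsto P(A)$ is real analytic on $U$; moreover $P(A)$ has real entries, its range and kernel being cut out by real linear equations. By the Riesz functional calculus $P(A)$ is the idempotent projecting onto the generalised eigenspace of $\lambda(A)$ along the complementary invariant subspace; since $\lambda(A)$ is simple this eigenspace is the line $\R\cdot v(A)^{\mrmT}$ (with $v(A)^{\mrmT}$ regarded as a column vector), $P(A)$ has rank one with $\operatorname{tr}P(A)=1$, and $AP(A)=\lambda(A)P(A)$. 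Taking traces yields
\[
\lambda(A)=\operatorname{tr}\!\big(AP(A)\big),\qquad A\in U,
\]
which is real analytic on $U$ as a composition of real analytic maps; as $A_0\in M$ was arbitrary, this proves (i).

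For (ii), keep the same $A_0$ and let $p$ be a strictly positive left Perron eigenvector of $A_0$ (Theorem \ref{thm:790}(ii)), so that $P(A_0)=v(A_0)^{\mrmT}p/(p\,v(A_0)^{\mrmT})$ (the standard rank-one formula for the projection onto a simple eigenvalue). With $e_1\in\R^N$ the first standard basis vector, $P(A_0)e_1=\big(p_1/(p\,v(A_0)^{\mrmT})\big)v(A_0)^{\mrmT}$ is a nonzero multiple of $v(A_0)^{\mrmT}$, and the real analytic function $s(A):=\sum_{i=1}^{N}\big(P(A)e_1\big)_i$ satisfies $s(A_0)=\big(p_1/(p\,v(A_0)^{\mrmT})\big)\sum_{i=1}^{N}v(A_0)_i\neq0$ by positivity of $p$ and $v(A_0)$. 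Hence $s$ is nonvanishing on a smaller neighbourhood $U'$ of $A_0$. For $A\in U'$ the vector $P(A)e_1$ lies in the line $\R\cdot v(A)^{\mrmT}$, say $P(A)e_1=\mu(A)v(A)^{\mrmT}$; summing coordinates and using $\sum_i v(A)_i=1$ gives $\mu(A)=s(A)$, so
\[
v(A)^{\mrmT}=\frac{P(A)e_1}{s(A)},\qquad A\in U',
\]
a quotient of real analytic maps with nonvanishing denominator. Thus each $A\mapsto v(A)_i$ is real analytic near $A_0$, and since $A_0$ was arbitrary, (ii) follows.

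The one delicate step — and the only place where Perron--Frobenius is genuinely invoked — is the first paragraph together with the nonvanishing of $s(A_0)$: simplicity and strict dominance of $\lambda(A)$ (Theorem \ref{thm:790}(i)) are what let a fixed contour trap exactly the Perron root, and positivity of the Perron eigenvectors (Theorem \ref{thm:790}(ii)) is what rules out $s(A_0)=0$; everything else is routine holomorphy of the resolvent. Alternatively, (i) follows from the analytic implicit function theorem applied to $\det(tI-A)=0$ (its $t$-derivative at $t=\lambda(A)$ being nonzero by simplicity), and (ii) from the same theorem applied to the square system formed by $N-1$ independent rows of $(A-\lambda(A)I)v^{\mrmT}=0$ together with $\sum_i v_i=1$, whose Jacobian in $v$ is invertible exactly because $\sum_i v(A_0)_i\neq0$.
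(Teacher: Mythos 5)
Your argument is correct, but it takes a genuinely different route from the paper. The paper localizes the Perron root by citing a perturbation theorem of Chatelin, then gets (i) from the implicit function theorem applied to the characteristic polynomial (simplicity of $\lambda(A_0)$ giving a nonvanishing $\lambda$-derivative), and gets (ii) by viewing $v(A)$ as the solution of the augmented linear system $(A-\lambda(A)I)x^{\mrmT}=0$, $\sum_i x_i=1$, deleting one redundant row to obtain an invertible $N\times N$ system and invoking Cramer's rule together with (i). You instead work with the Riesz spectral projection $P(A)=\frac{1}{2\pi i}\oint_{\gamma}(zI-A)^{-1}\,dz$ over a fixed contour: your first paragraph in effect reproves the localization statement the paper cites (via the argument principle, conjugation symmetry of the spectrum of a real matrix, and Theorem \ref{thm:790}(i)), and then both assertions fall out uniformly from analyticity of $A\mapsto P(A)$, namely $\lambda(A)=\operatorname{tr}(AP(A))$ and $v(A)^{\mrmT}=P(A)e_1/s(A)$, where positivity of the Perron eigenvectors guarantees $s(A_0)\neq 0$. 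What your approach buys is self-containedness (no external perturbation theorem) and a single mechanism handling both (i) and (ii); what the paper's approach buys is elementarity (no contour integration, only the implicit function theorem and Cramer's rule) and an explicit rational formula for $v(A)$ in terms of $\lambda(A)$. One small caveat concerns only your closing aside: in the implicit-function-theorem variant for (ii), invertibility of the reduced Jacobian is not "exactly because $\sum_i v(A_0)_i\neq0$"; it comes from the simplicity of $\lambda(A_0)$ (so $A_0-\lambda(A_0)I$ has rank $N-1$) together with a correct choice of which $N-1$ eigenvalue-equation rows to retain alongside the normalization row, which is precisely how the paper argues. Since that remark is only an alternative sketch, it does not affect the validity of your main proof.
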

\begin{proof}
(i)\
Fix $A_0\in M$ and write $\lambda_0=\lambda(A_0)$.
By \cite[Th\'{e}or\`{e}me 4.3.6]{Chatelin}, there exist a neighbourhood $U\subset M$ of $A_0$ and a positive number $\epsilon$ such that any $A\in U$ has an eigenvalue $\eta(A)\in\C$ satisfying the following two conditions:
\begin{align}
&\eta(A)\ \mbox{is the unique eigenvalue}\ \eta\in\C\ \mbox{of}\ A\ \mbox{such that}\ |\lambda_0-\eta|<\epsilon,\label{cond:390}\\
&|\eta(A)|>|\eta|\ \mbox{holds for any other eigenvalue}\ \eta\in\C\ \mbox{of}\ A.
\label{cond:428}
\end{align}
Notice that (\ref{cond:428}) implies $\eta(A)=\lambda(A)$.

For $A\in M$, we denote by $\Phi_A(\lambda)$ the characteristic polynomial of $A$.
Since the Perron root $\lambda(A)$ has algebraic multiplicity one, the fraction $\Phi_A(\lambda)/(\lambda-\lambda(A))$ is non-zero at $\lambda=\lambda(A)$, and hence, 
\[
\frac{\partial\Phi_A(\lambda)}{\partial\lambda}\bigg|_{\lambda=\lambda_0,\,A=A_0}\neq0.
\]
Thus, by the implicit function theorem, there exist a neighbourhood $V\subset U$ of $A_0$ and a real analytic map $F:V\to\R$ such that, for $A\in V$, $F(A)$ is an eigenvalue of $A$ and $|\lambda_0-F(A)|<\epsilon$. 
It follows from (\ref{cond:390}) that $F(A)=\lambda(A),\ A\in V$.
We conclude that (i) holds. 

(ii)\ 
Let $A\in M$.
Clearly, $v(A)$ is the unique solution of the following system of $N+1$ linear equations in the $N$ variables $x_1,\dots,x_N$:
\begin{equation}
\label{eq:181}
\begin{pmatrix}
A_{11}-\lambda(A)&A_{12}&\cdots&A_{1N}\\
A_{21}&A_{22}-\lambda(A)&\cdots&A_{2N}\\
\vdots&\vdots&\ddots&\vdots\\
A_{N1}&A_{N2}&\cdots&A_{NN}-\lambda(A)\\
1&1&\cdots&1
\end{pmatrix}
\begin{pmatrix}
x_1\\
x_2\\
\vdots\\
x_N
\end{pmatrix}
=\begin{pmatrix}
0\\
0\\
\vdots\\
0\\
1
\end{pmatrix}
\end{equation}
Let $C_A$ be the coefficient matrix of (\ref{eq:181}).
For $i\in\{1,\dots,N\}$, we denote by $C_A^{(i)}$ the $N\times N$ submatrix of $C_A$ obtained by deleting the $i$th row.
Moreover, for $i\in\{1,\dots,N\}$, we denote by $b^{(i)}\in\R^N$ the subvector of $(0,\dots,0,1)\in\R^{N+1}$ obtained by deleting the $i$th coordinate. 

Fix $A_0\in M$. 
Since $C_{A_0}$ has rank $N$, there exist a neighbourhood $U\subset M$ of $A_0$ and a number $i_0\in\{1,\dots,N\}$ such that $C_A^{(i_0)}$ is invertible for any $A\in U$.
Then, for any $A\in U$, $v(A)^{\mrmT}$ is a unique solution of the system of linear equations $C_A^{(i_0)}(x_1,\dots,x_N)^{\mrmT}=(b^{(i_0)})^{\mrmT}$.
Thus, (i) and Cramer's rule imply that, for any $i\in\{1,\dots,N\}$, the map $U\ni A\mapsto v(A)_i\in\R$ is real analytic.
\end{proof}

For $n\in\N$, we set
\begin{align*}
&\widetilde{L}_n:=\{f\in L_n(\TMS;\R):f(\omega)\neq f(\omega^{\prime})\ \mbox{for any}\ \omega,\omega^{\prime}\in\TMS\ \mbox{with}\ \omega|n\neq\omega^{\prime}|n\},\\
&G_n:=\left\{f\in L_n(\TMS;\R):\widehat{f}\in\widetilde{L}_n\right\},
\end{align*}
where $\widehat{f}$ is defined by (\ref{eq:377}).

\begin{lemma}
\label{lem:408}
Let $X,Y\subset\TMS$ and $\Phi:X\to Y$.
Let also $n\in\N$ and $f,g\in L_n(\TMS;\R)$.
Assume that the following four conditions are satisfied:
\begin{itemize}
\item$\tms(X)\subset X$ and $\tms(Y)\subset Y$.
\item Both $X$ and $Y$ are dense in $\TMS$.
\item$\Phi(X)=Y$ and $\Phi\circ\tms=\tms\circ\Phi$ on $X$.
\item$f=g\circ\Phi$ on $X$.
\end{itemize}
Then, the following two assertions hold:
\begin{enumerate}
\item[(i)]$f\in\widetilde{L}_n$ if and only if $g\in\widetilde{L}_n$.
\item[(ii)]Let $f\in\widetilde{L}_n$. 
Then, $\Phi$ has a unique continuous extension $\Phi^*:\TMS\to\TMS$.
Moreover, $\Phi^*\circ\tms=\tms\circ\Phi^*$. 
\end{enumerate}
\end{lemma}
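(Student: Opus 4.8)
The plan is to derive (i) from a cardinality argument and then bootstrap (ii) from it.

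For part (i), I would use that $f$, being $n$-locally constant, is constant on each cylinder $[w]$ with $w\in W_{{\bf A}}^{n}$, every such cylinder being nonempty since ${\bf A}$ is aperiodic (each admissible word extends to an admissible sequence). Writing $\bar f\colon W_{{\bf A}}^{n}\to\R$ for the induced map $\bar f(w)=f|_{[w]}$, one has $f\in\widetilde L_{n}$ exactly when $\bar f$ is injective, i.e.\ exactly when $\#f(\TMS)=\#W_{{\bf A}}^{n}$, and similarly for $g$. The key observation is $f(\TMS)=g(\TMS)$: density of $X$ gives $f(X)=f(\TMS)$ (every point of $\TMS$ lies in the same length-$n$ cylinder as some point of $X$), density of $Y$ gives $g(Y)=g(\TMS)$ likewise, and $f=g\circ\Phi$ on $X$ together with $\Phi(X)=Y$ gives $f(X)=g(\Phi(X))=g(Y)$. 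Comparing the cardinality of this common value set with $\#W_{{\bf A}}^{n}$ then yields $f\in\widetilde L_{n}\iff g\in\widetilde L_{n}$.

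For part (ii), I would assume $f\in\widetilde L_{n}$, so that by (i) also $g\in\widetilde L_{n}$; then $\bar f$ and $\bar g$ are bijections of $W_{{\bf A}}^{n}$ onto the common value set, and $\theta:=\bar g^{-1}\circ\bar f$ is a bijection of $W_{{\bf A}}^{n}$. For $\omega\in X$, iterating $\Phi\circ\tms=\tms\circ\Phi$ (legitimate because $\tms(X)\subset X$) gives $\Phi(\tms^{k}\omega)=\tms^{k}(\Phi\omega)$ for all $k\ge0$; evaluating $f=g\circ\Phi$ at $\tms^{k}\omega$ and rewriting both sides through the initial words $(\tms^{k}\omega)|n$ and $(\tms^{k}(\Phi\omega))|n$ (using $n$-local constancy of $f$ and $g$) yields
\[
(\Phi\omega)_{k}\cdots(\Phi\omega)_{k+n-1}=\theta(\omega_{k}\cdots\omega_{k+n-1}),\qquad k\ge0 .
\]
In particular $(\Phi\omega)_{k}$ depends only on $\omega_{k},\dots,\omega_{k+n-1}$, so $\omega|K=\omega'|K$ forces $(\Phi\omega)|(K-n+1)=(\Phi\omega')|(K-n+1)$ for $\omega,\omega'\in X$ and $K\ge n$; this is uniform continuity of $\Phi$ with an explicit modulus in the metric $d$. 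Since $\TMS$ is compact, hence complete, and $X$ is dense, $\Phi$ extends uniquely to a continuous $\Phi^{*}\colon\TMS\to\TMS$, the uniqueness being the standard fact that continuous maps agreeing on a dense set coincide. Finally $\Phi^{*}\circ\tms$ and $\tms\circ\Phi^{*}$ are continuous and agree on the dense set $X$ (where both equal $\Phi\circ\tms=\tms\circ\Phi$), hence everywhere.

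I expect the uniform-continuity step in (ii) to be the main obstacle: its resolution depends on first invoking (i) to secure $g\in\widetilde L_{n}$ and then recognizing that the hypotheses force $\Phi$ to be a sliding-block code determined by the single window map $\theta$, which is exactly what both controls the modulus of continuity and makes the shift-commutation descend to $\Phi^{*}$. Everything else should be routine: verifying that admissible-word cylinders are nonempty so density really gives $f(X)=f(\TMS)$, that the overlapping words $\theta(\omega_{k}\cdots\omega_{k+n-1})$ are automatically mutually consistent along $\Phi\omega\in\TMS$, and that $\omega|K=\omega'|K$ is equivalent to a quantitative estimate on $d(\omega,\omega')$.
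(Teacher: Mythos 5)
Your proposal is correct and follows essentially the same route as the paper: part (i) via the equality of value sets $f(X)=g(Y)$ forced by density and $f=g\circ\Phi$, compared in cardinality with $\#W_{{\bf A}}^{n}$, and part (ii) via the injectivity of $g$ on length-$n$ cylinders together with the shift-commutation to show $\Phi$ preserves agreement of initial words (the paper iterates in blocks of length $n$ where you slide the window by one, a cosmetic difference), then uniform continuity, density, and continuity of the extension. No gaps.
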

\begin{proof}
(i)\ We have
\begin{equation}
\label{eq:214}
\{f|_{[w]}\}_{w\in W_{{\bf A}}^n}=f(X)=g(\Phi(X))=g(Y)=\{g|_{[w]}\}_{w\in W_{{\bf A}}^n},
\end{equation}
where the first and last equalities follow from the densenesses of $X$ and $Y$, respectively.
It is obvious that $f\in\widetilde{L}_n$ if and only if $\#\{f|_{[w]}\}_{w\in W_{{\bf A}}^n}=\#W_{{\bf A}}^n$.
This and (\ref{eq:214}) imply that (i) holds.

(ii)\ It is enough to show that $\Phi$ is uniformly continuous on $X$.
Thus, we show that the following assertion holds:
\begin{equation*}
\begin{aligned}
\Phi(\omega)|nm=\Phi(\omega^{\prime})|nm\ \mbox{for}\ m\in\N\ \mbox{and}\ \omega,\omega^{\prime}\in X\ \mbox{with}\ \omega|nm=\omega^{\prime}|nm.\\
\end{aligned}
\end{equation*}

First, we consider the case in which $m=1$.
Let $\omega,\omega^{\prime}\in X$ satisfy $\omega|n=\omega^{\prime}|n$.
Then, $g(\Phi(\omega))=f(\omega)=f(\omega^{\prime})=g(\Phi(\omega^{\prime}))$.
Since $g\in\widetilde{L}_n$ from (i), we obtain $\Phi(\omega)|n=\Phi(\omega^{\prime})|n$.

Next, we consider the case in which $m>1$.
Let $\omega,\omega^{\prime}\in X$ satisfy $\omega|nm=\omega^{\prime}|nm$.
It is enough to show that $\tms^{kn}\Phi(\omega)|n=\tms^{kn}\Phi(\omega^{\prime})|n$ for $k\in\{0,\dots,m-1\}$.
Let $k\in\{0,\dots,m-1\}$.
Then, $\tms^{kn}\omega|n=\tms^{kn}\omega^{\prime}|n$, and hence, $\Phi(\tms^{kn}\omega)|n=\Phi(\tms^{kn}\omega^{\prime})|n$.
Since $\Phi$ and $\tms$ commute, we obtain $\tms^{kn}\Phi(\omega)|n=\tms^{kn}\Phi(\omega^{\prime})|n$.
\end{proof}

A homeomorphism $\tau:\TMS\to\TMS$ is called an \emph{automorphism} of $\TMS$ if $\tau\circ\tms=\tms\circ\tau$ holds.
We denote by $\Aut(\TMS)$ the set of all automorphisms of $\TMS$.

\begin{lemma}
\label{lem:087}
Let $n\in\N$ and $f,g\in L_n(\TMS;\R)$.
Assume that $(\TMS,\tms,\mu_f)$ and $(\TMS,\tms,\mu_g)$ are isomorphic and that $f\in G_n$.
Then, for any isomorphism $\Phi:X\to Y$ from $(\TMS,\tms,\mu_f)$ to $(\TMS,\tms,\mu_g)$, there exists a unique continuous extension $\Phi^*:\TMS\to\TMS$ of $\Phi$.
Moreover, $\Phi^*\in\Aut(\TMS)$.
\end{lemma}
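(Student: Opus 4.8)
The plan is to deduce the lemma from Lemma~\ref{lem:408}, applied not to $f$ and $g$ themselves but to the normalized potentials $\widehat f$ and $\widehat g$. The two ingredients are Lemma~\ref{lem:711} with $n=1$, which turns the measure-theoretic isomorphism $\Phi$ into an everywhere-defined coboundary relation between $\widehat f$ and $\widehat g$, and the Gibbs property of $\mu_f$ and $\mu_g$ (Remark~\ref{rem:785}), which forces the relevant full-measure sets to be dense. Once Lemma~\ref{lem:408}(ii) produces the continuous extension $\Phi^*$, bijectivity of $\Phi^*$ will be obtained by running the same argument on $\Phi^{-1}$.

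First, with $c=\log\lambda_f\lambda_g^{-1}$, Lemma~\ref{lem:711} applied with $n=1$ gives $\widehat f-\widehat g\circ\Phi=c$ $\mu_f$-a.e. Since $\mu_f$ is $\tms$-invariant and $\tms(X)\subset X$, the Borel set
\[
X_1=\{\omega\in X:\widehat f(\tms^k\omega)=\widehat g(\Phi\tms^k\omega)+c\ \text{for all}\ k\ge0\}
\]
satisfies $\tms(X_1)\subset X_1$ and $\mu_f(X_1)=1$, while $Y_1:=\Phi(X_1)$ is Borel (since $\Phi^{-1}$ is Borel measurable), satisfies $\tms(Y_1)\subset Y_1$ and $\mu_g(Y_1)=\mu_f(X_1)=1$, and carries $\Phi^{-1}$ as the inverse of $\Phi|_{X_1}$. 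Replacing $\Phi$ by $\Phi|_{X_1}\colon X_1\to Y_1$, still an isomorphism from $(\TMS,\tms,\mu_f)$ to $(\TMS,\tms,\mu_g)$, I may assume that $\widehat f=\widehat g\circ\Phi+c$ holds on all of $X$. By Remark~\ref{rem:785}, $\mu_f$ and $\mu_g$ assign positive mass to every non-empty open subset of $\TMS$, so both $X$ and $Y$ are dense in $\TMS$.

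Next, I would use that $\widehat f,\widehat g\in L_n(\TMS;\R)$ whenever $f,g\in L_n(\TMS;\R)$ (the leading eigenfunction of the transfer operator of a locally constant potential is $(n-1)$-locally constant, which is exactly what makes $G_n$ well-defined), so that $\widehat g+c\in L_n(\TMS;\R)$, while the hypothesis $f\in G_n$ says precisely that $\widehat f\in\widetilde L_n$. With $(X,Y,\Phi,f,g)$ replaced by $(X,Y,\Phi,\widehat f,\widehat g+c)$, the four hypotheses of Lemma~\ref{lem:408} --- the $\tms$-invariance of $X$ and $Y$, the density of $X$ and $Y$, the relations $\Phi(X)=Y$ and $\Phi\circ\tms=\tms\circ\Phi$, and the identity $\widehat f=(\widehat g+c)\circ\Phi$ on $X$ --- have all been checked above, so part (ii) yields a unique continuous extension $\Phi^*\colon\TMS\to\TMS$ of $\Phi$ with $\Phi^*\circ\tms=\tms\circ\Phi^*$, uniqueness being forced by density of $X$; and part (i) moreover gives $\widehat g+c\in\widetilde L_n$, hence $\widehat g\in\widetilde L_n$, that is $g\in G_n$.

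Finally, to conclude that $\Phi^*\in\Aut(\TMS)$ it suffices, $\TMS$ being compact Hausdorff, to show that $\Phi^*$ is bijective. The map $\Phi^{-1}\colon Y\to X$ is an isomorphism from $(\TMS,\tms,\mu_g)$ to $(\TMS,\tms,\mu_f)$, and $g\in G_n$ by the previous paragraph, so the same two steps apply to $\Phi^{-1}$ and produce a continuous $\Psi^*\colon\TMS\to\TMS$ commuting with $\tms$ that extends $\Phi^{-1}$. Then $\Psi^*\circ\Phi^*$ and $\Phi^*\circ\Psi^*$ are continuous self-maps of $\TMS$ agreeing with $\mathrm{id}_{\TMS}$ on a $\tms$-invariant Borel set of full measure --- obtained by intersecting the restricted domains and invoking Remark~\ref{rem:785} once more for density --- hence equal to $\mathrm{id}_{\TMS}$; so $\Phi^*$ is a homeomorphism commuting with $\tms$. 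I expect the main obstacle to lie in the bookkeeping of the first step: upgrading the $\mu_f$-a.e.\ identity of Lemma~\ref{lem:711} to an everywhere identity on a $\tms$-invariant dense Borel subset of $X$, and checking that the resulting restriction of $\Phi$ is still a genuine isomorphism, so that Lemma~\ref{lem:408} --- in particular its part (i) --- can be re-applied to $\Phi^{-1}$ to deliver the bijectivity of $\Phi^*$.
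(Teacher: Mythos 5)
Your proof is correct and follows essentially the same route as the paper: apply Lemma~\ref{lem:711} and Remark~\ref{rem:785} to verify the hypotheses of Lemma~\ref{lem:408} for $\widehat f$ and $\widehat g$, extract $\Phi^*$ from part (ii), and use part (i) to extend $\Phi^{-1}$ as well and conclude $\Phi^*\in\Aut(\TMS)$. The only difference is that you spell out details the paper compresses into ``we may assume'' --- the a.e.-to-everywhere reduction, the additive constant $\log\lambda_f\lambda_g^{-1}$, and the dense-set argument for bijectivity --- all of which are handled correctly.
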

\begin{proof}
Remark \ref{rem:785} in Section \ref{sec:preliminaries} implies that $X$ and $Y$ are dense in $\TMS$.
Therefore, by Lemma \ref{lem:711}, we may assume that all of the four conditions in Lemma \ref{lem:408} are satisfied (we replace $f$ and $g$ by $\widehat{f}$ and $\widehat{g}$, respectively).
Since $\widehat{f}\in\widetilde{L}_n$, we see from Lemma \ref{lem:408} (ii) that $\Phi$ has a unique continuous extension $\Phi^*:\TMS\to\TMS$ and $\Phi^*\circ\tms=\tms\circ\Phi^*$.
Moreover, since $\widehat{g}\in\widetilde{L}_n$ from Lemma \ref{lem:408} (i), the inverse $\Phi^{-1}:Y\to X$ also has a unique continuous extension $(\Phi^{-1})^*:\TMS\to\TMS$, and hence, $\Phi^*\in\Aut(\TMS)$ follows.
\end{proof}

\begin{lemma}
\label{lem:891}
Assume that ${\bf A}$ satisfies (\ref{cond:278}).
Then, for any stochastic matrix $P\in M$ and two words $ij,\,kl\in W_{{\bf A}}^2$ with $ij\neq kl$, there exists a stochastic matrix $Q\in M$ such that $P(ij)/P(kl)\neq Q(ij)/Q(kl)$.
\end{lemma}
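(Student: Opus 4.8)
The plan is to produce, for a given stochastic matrix $P\in M$, a new stochastic matrix $Q\in M$ that differs from $P$ in a single row and satisfies $Q(ij)/Q(kl)\neq P(ij)/P(kl)$. Every modification below alters only entries that are already positive, keeps them positive, and preserves the relevant row sum $1$, so the modified matrix automatically lies in $M$ and is stochastic. I would split into two cases according to whether $i=k$.

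Suppose first $i=k$. Since $ij\neq kl$, this forces $j\neq l$, and both $j$ and $l$ are successors of $i$, so $\delta(i)\ge2$. Put $a=P(ij)>0$, $b=P(il)>0$ and $s=a+b>0$. Replacing the $(i,j)$ and $(i,l)$ entries of $P$ by $a'$ and $s-a'$ for any $a'\in(0,s)$ with $a'\neq a$, and leaving every other entry unchanged, yields a stochastic matrix $Q\in M$ with $Q(ij)/Q(kl)=a'/(s-a')\neq a/(s-a)=P(ij)/P(kl)$, since $a'\neq a$ and $s>0$.

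Now suppose $i\neq k$; the point is that $P(ij)$ depends only on row $i$ and $P(kl)$ only on row $k$, so they can be perturbed independently. If $\delta(i)\ge2$, choose $j''\neq j$ with ${\bf A}_{ij''}=1$ and replace $P(ij)$ by some $a'\in(0,\,P(ij)+P(ij''))$ with $a'\neq P(ij)$, adding the difference $P(ij)-a'$ to the $(i,j'')$ entry and changing nothing else; this alters $P(ij)$ but not $P(kl)$, hence alters the ratio. If instead $\delta(i)=1$, then the unique successor of $i$ must be $j$ because $ij$ is ${\bf A}$-admissible, so $P(ij)=1$ is forced for every stochastic matrix in $M$ and cannot be moved --- and this is precisely where hypothesis (\ref{cond:278}) is used: since $i\neq k$ and at most one index can have $\delta$ equal to $1$, we must have $\delta(k)\ge2$, so the symmetric perturbation of row $k$ alters $P(kl)$ while fixing $P(ij)$, and again the ratio changes. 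These cases are exhaustive.

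The argument is elementary; the only points needing genuine attention are the routine bookkeeping that each perturbation keeps the matrix in $M$ and stochastic (no prescribed zero turning positive, no prescribed positive entry leaving $(0,\infty)$, row sums still $1$) and the observation that (\ref{cond:278}) rules out the single obstruction, namely $i\neq k$ with $\delta(i)=\delta(k)=1$, where $P(ij)/P(kl)\equiv1$ would be forced for every stochastic $P\in M$.
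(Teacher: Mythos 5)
Your proof is correct and is essentially the same argument as the paper's: an elementary row-perturbation construction of $Q$, with hypothesis (\ref{cond:278}) used exactly to exclude the configuration $i\neq k$, $\delta(i)=\delta(k)=1$. You organize the cases by $i=k$ versus $i\neq k$ (and spell out the perturbations explicitly), whereas the paper splits on whether $\delta(i),\delta(k)>1$ or one of them equals $1$, but the substance is identical.
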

\begin{proof}
First, we consider the case in which $\delta(i)>1$ and $\delta(k)>1$.
Then, we can take a stochastic matrix $Q\in\ M$ so that $Q(ij)>P(ij)$ and $Q(kl)<P(kl)$.

Next, we consider the case in which $\delta(i)=1$ or $\delta(k)=1$.
Without loss of generality, we may assume that $\delta(k)=1$.
Then, $\delta(i)>1$ since ${\bf A}$ satisfies (\ref{cond:278}).
Thus, we can take a stochastic matrix $Q\in M$ so that $Q(ij)>P(ij)$.
Since $P(kl)=Q(kl)=1$, we obtain $P(ij)/P(kl)<Q(ij)/Q(kl)$.
\end{proof}

\begin{lemma}
\label{lem:226}
Assume that ${\bf A}$ satisfies (\ref{cond:278}).
Then, $G_2$ is an open and dense subset of $L_2(\TMS;\R)$.
\end{lemma}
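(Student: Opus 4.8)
The plan is to coordinatize $L_2(\TMS;\R)$ by the finitely many values $(f|_{[ij]})_{ij\in W_{\bf A}^2}$, identifying it with $\R^{d}$ where $d=\#W_{\bf A}^2$, and then to exhibit $G_2^{c}$ as a finite union of proper real-analytic subsets. The first observation is that for $f\in L_2(\TMS;\R)$ the eigenfunction $h_f$ of Theorem \ref{thm:410} is $1$-locally constant, being given by a Perron eigenvector associated with $A(f)$; hence $\widehat f$ is again $2$-locally constant, with $\widehat f|_{[ij]}=f|_{[ij]}+\log(h_f|_{[i]})-\log(h_f|_{[j]})$. Since $f\mapsto A(f)$ is real analytic into $M$ and, by Lemma \ref{lem:965}, the relevant eigenvector data depends real-analytically on the matrix, the map $\Lambda\colon L_2(\TMS;\R)\to L_2(\TMS;\R)$, $f\mapsto\widehat f$, is real analytic. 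As $\widetilde L_2$ is the complement of the finite union of the hyperplanes $\{g:g|_{[ij]}=g|_{[kl]}\}$ over distinct $ij,kl\in W_{\bf A}^2$, it is open, and hence $G_2=\Lambda^{-1}(\widetilde L_2)$ is open. This settles openness.

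For density, put $\Psi_{ij,kl}(f)=\widehat f|_{[ij]}-\widehat f|_{[kl]}$; this is a real-analytic function on the connected space $L_2(\TMS;\R)\cong\R^{d}$, and $G_2^{c}=\bigcup\Psi_{ij,kl}^{-1}(0)$, the union over distinct $ij,kl\in W_{\bf A}^2$. If each $\Psi_{ij,kl}$ is not identically $0$, then each $\Psi_{ij,kl}^{-1}(0)$ is closed and nowhere dense, so is their finite union, and $G_2$ is open and dense. Thus everything reduces to the claim that for every pair of distinct $ij,kl\in W_{\bf A}^2$ there is an $f\in L_2(\TMS;\R)$ with $\widehat f|_{[ij]}\neq\widehat f|_{[kl]}$.

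To produce such an $f$ I would translate the condition into one about stochastic matrices, where Lemma \ref{lem:891} applies. The identity $\mcL_{\widehat f}1=\lambda_f$ from the proof of Lemma \ref{lem:711}, read on $1$-locally constant functions, shows that $\lambda_f^{-1}A(\widehat f)$ is a stochastic matrix — namely (the appropriate orientation of) the transition matrix $P(f)$ — so that $\widehat f|_{[ij]}=\log\lambda_f+\log P(f)_{ij}$, and hence $\widehat f|_{[ij]}=\widehat f|_{[kl]}$ if and only if $P(f)_{ij}=P(f)_{kl}$. Moreover every stochastic $Q\in M$ is realized, since for $f:=\log Q$ one has $A(f)=Q$, Perron root $1$, constant eigenfunction and $\widehat f=f$, whence $P(f)=Q$. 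So the existence of an $f$ with $\widehat f|_{[ij]}\neq\widehat f|_{[kl]}$ is equivalent to the existence of a stochastic $Q\in M$ with $Q_{ij}\neq Q_{kl}$; if there were none, the ratio $Q_{ij}/Q_{kl}$ would be constantly equal to $1$ over all stochastic $Q\in M$, contradicting Lemma \ref{lem:891} (which, using that ${\bf A}$ satisfies (\ref{cond:278}), supplies two stochastic matrices realizing two different values of this ratio). Therefore each $\Psi_{ij,kl}\not\equiv0$, which finishes the proof.

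The step I expect to be the main obstacle is the bookkeeping in the last paragraph: identifying precisely which stochastic matrix is attached to $\widehat f$ and, in particular, getting the orientation right (the transfer operator $\mcL_f$ sums over $\tms$-preimages, which introduces a transposition that must be tracked consistently between $A(f)$, $h_f$ and $P(f)$), then checking surjectivity of $f\mapsto P(f)$ onto the stochastic matrices of $M$ and the equivalence between $\widehat f|_{[ij]}=\widehat f|_{[kl]}$ and equality of the two corresponding entries. The other ingredients — analyticity of $\Lambda$, openness of $\widetilde L_2$, and the real-analytic/nowhere-dense argument — are routine.
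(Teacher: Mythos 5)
Your argument is essentially the paper's own: the paper also reduces the statement to showing, for each pair of distinct words $ij,kl\in W_{{\bf A}}^2$, that a real-analytic function (written there on $M$ as $A\mapsto A(ij)/A(kl)-v(A)_i v(A)_l/(v(A)_j v(A)_k)$, which under the coordinate change $f\mapsto A(f)$ is your $\Psi_{ij,kl}$) is non-constant, by evaluating on stochastic matrices and invoking Lemma \ref{lem:891}, and then uses that zero sets of non-constant real-analytic maps are nowhere dense (via \cite{Mityagin}), with openness coming from Lemma \ref{lem:965} exactly as in your first paragraph. The orientation bookkeeping you single out as the main obstacle is resolved in the paper precisely as you propose: the condition $\widehat f|_{[ij]}\neq\widehat f|_{[kl]}$ is identified with $P(f)_{ij}\neq P(f)_{kl}$, every stochastic $Q\in M$ is realized as $P(\log Q)$, and for stochastic matrices the normalized eigenvector is the constant vector $(1/N,\dots,1/N)$, so your proof and the paper's coincide up to the choice of working in $L_2(\TMS;\R)$ rather than in $M$.
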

\begin{proof}
For $ij,kl\in W_{{\bf A}}^2$ with $ij\neq kl$, we define $G(ij;kl)\subset M$ by
\[
G(ij;kl)=\left\{A\in M:\frac{A(ij)}{A(kl)}-\frac{v(A)_i\,v(A)_l}{v(A)_j\,v(A)_k}\neq0\right\}.
\]
Recall the matrix $A(f)$ in (\ref{eq:787}).
The homeomorphism $L_2(\TMS;\R)\ni f\mapsto A(f)\in M$ maps $G_2$ to the intersection $\bigcap_{ij,\,kl\in W_{{\bf A}}^2;\,ij\neq kl}G(ij;kl)$.
Thus, it is enough to prove that $G(ij;kl)$ is an open and dense subset of $M$ for $ij,kl\in W_{{\bf A}}^2$ with $ij\neq kl$.

Fix $ij,kl\in W_{{\bf A}}^2$ with $ij\neq kl$.
From Lemma \ref{lem:965}, the map
\begin{equation}
\label{eq:063}
M\ni A\mapsto \frac{A(ij)}{A(kl)}-\frac{v(A)_i\,v(A)_l}{v(A)_j\,v(A)_k}\in\R
\end{equation}
is real analytic, and hence, continuous.
Thus, $G(ij;kl)$ is open in $M$.

We prove the denseness.
It is easy to see that if $P\in M$ is a stochastic matrix, then $v(P)=(1/N,\dots,1/N)$.
Hence, from Lemma \ref{lem:891}, the map (\ref{eq:063}) is a non-constant real analytic map.
It is well known that the zero set of a non-constant real analytic map has Lebesgue measure zero (see, e.g., \cite{Mityagin}).
Therefore, $G(ij;kl)^c$ has no interior point, and thus, $G(ij;kl)$ is dense in $M$.
\end{proof}

We are ready to prove Theorem \ref{thm:325}.

\begin{proof}[Proof of Theorem \ref{thm:325}]
Lemma \ref{lem:087} implies $G_2\subset\{f\in\mcC:[f]_2=[f]_3\}$.
Moreover, from Lemma \ref{lem:226}, $G_2$ is an open and dense subset of $\mcC$.
\end{proof}

Barreira and Saraiva proved that the set $\{f\in\mcC:[f]_2\subsetneq[f]_1\}$ contains an open and dense subset of $\mcC$ when ${\bf A}=\left(\begin{smallmatrix}
0&1&1\\
1&0&1\\
1&1&0
\end{smallmatrix}\right)$ and $\mcC=L_2(\TMS;\R)$; see \cite[Theorem 5]{Barreira}.
We prove that the set $\{f\in\mcC:[f]_3\subsetneq[f]_1\}$ contains an open and dense subset of $\mcC$, in the same set-up.
Since $[f]_2\subset[f]_3\subset[f]_1$ from Theorem \ref{thm:401}, our result is an extension of the result of Barreira and Saraiva.

\begin{corollary}
\label{cor:778}
Let ${\bf A}=\left(\begin{smallmatrix}
0&1&1\\
1&0&1\\
1&1&0
\end{smallmatrix}\right)$ and $\mcC=L_2(\TMS;\R)$.
Then, the set
\[
\{f\in\mcC:[f]_3\subsetneq[f]_1\}
\]
contains an open and dense subset of $\mcC$.
\end{corollary}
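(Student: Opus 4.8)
The plan is to combine Theorem \ref{thm:325} with the non-rigidity result \cite[Theorem 5]{Barreira} of Barreira and Saraiva. First I would note that the matrix ${\bf A}=\left(\begin{smallmatrix}0&1&1\\1&0&1\\1&1&0\end{smallmatrix}\right)$ satisfies $\delta(i)=2$ for every $i\in\{1,2,3\}$, so $\#\{i\in\{1,2,3\}:\delta(i)=1\}=0\le1$; that is, condition (\ref{cond:278}) holds. Hence Theorem \ref{thm:325} applies, and the set $\{f\in\mcC:[f]_2=[f]_3\}$ contains an open and dense subset $U\subset\mcC$.

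Second, I would invoke \cite[Theorem 5]{Barreira}, which asserts that, for this same ${\bf A}$ and $\mcC=L_2(\TMS;\R)$, the set $\{f\in\mcC:[f]_2\subsetneq[f]_1\}$ contains an open and dense subset $V\subset\mcC$. The intersection $U\cap V$ of two open dense subsets is again an open and dense subset of $\mcC$, so it suffices to show $U\cap V\subset\{f\in\mcC:[f]_3\subsetneq[f]_1\}$.

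To verify this inclusion, fix $f\in U\cap V$. One always has $[f]_2\subset[f]_3$, and Theorem \ref{thm:401} gives $[f]_3\subset[f]_1$. From $f\in U$ we get $[f]_2=[f]_3$, and from $f\in V$ we get $[f]_2\subsetneq[f]_1$; combining these yields $[f]_3=[f]_2\subsetneq[f]_1$, so $f$ belongs to the asserted set. This completes the proof.

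Since the argument is essentially bookkeeping, there is no serious obstacle: the only points requiring attention are that (\ref{cond:278}) genuinely holds for this ${\bf A}$ (immediate from $\delta\equiv2$) and that the chain $[f]_2\subset[f]_3\subset[f]_1$ is available, the first inclusion being trivial and the second being Theorem \ref{thm:401}. All the substantive content is carried by Theorem \ref{thm:325} and \cite[Theorem 5]{Barreira}.
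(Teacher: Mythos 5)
Your proposal is correct and follows essentially the same route as the paper: verify condition (\ref{cond:278}) for this ${\bf A}$, apply Theorem \ref{thm:325} together with \cite[Theorem 5]{Barreira}, and use the inclusion $\{f\in\mcC:[f]_3\subsetneq[f]_1\}\supset\{f\in\mcC:[f]_2\subsetneq[f]_1\}\cap\{f\in\mcC:[f]_2=[f]_3\}$ (your appeal to Theorem \ref{thm:401} is even superfluous, since $[f]_3=[f]_2\subsetneq[f]_1$ already gives the strict inclusion).
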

\begin{proof}
By \cite[Theorem 5]{Barreira}, the set $\{f\in\mcC:[f]_2\subsetneq[f]_1\}$ contains an open and dense subset of $\mcC$.
It is clear that ${\bf A}=\left(\begin{smallmatrix}
0&1&1\\
1&0&1\\
1&1&0
\end{smallmatrix}\right)$ satisfies (\ref{cond:278}).
Therefore, the assertion follows from Theorem \ref{thm:325} and the trivial inclusion $\{f\in\mcC:[f]_3\subsetneq[f]_1\}\supset\{f\in\mcC:[f]_2\subsetneq[f]_1\}\cap\{f\in\mcC:[f]_2=[f]_3\}$.
\end{proof}


\section*{Funding}
The author is supported by FY2019 Hiroshima University Grant-in-Aid for Exploratory Research (The researcher support for young scientists).


\end{document}